\documentclass[12pt]{amsart}

\usepackage[utf8]{inputenc}
\usepackage[T1]{fontenc}
\usepackage{amsfonts}
\usepackage{amsmath}
\usepackage[mathscr]{eucal}
\usepackage{amssymb}
\usepackage{amstext}
\usepackage{amsthm}
\usepackage{mathrsfs}
\usepackage{centernot}
\usepackage{enumitem}
\usepackage{graphicx, color}
\usepackage{caption}
\usepackage{subcaption}
\usepackage{url}
\usepackage{float}
\usepackage[a4paper,margin=2cm]{geometry}

\AtBeginDocument{          }

\numberwithin{equation}{section}

\newtheorem{theorem}{Theorem}[section]
\newtheorem{corollary}[theorem]{Corollary}
\newtheorem{lemma}[theorem]{Lemma}
\newtheorem{proposition}[theorem]{Proposition}

\makeatletter
\newtheoremstyle{boldremark}
  {\topsep}   
  {\topsep}   
  {}          
  {}          
  {\bfseries} 
  {.}         
  {.5em}      
  {}          
\makeatother

\theoremstyle{boldremark}
\newtheorem{remark}[theorem]{Remark}

\title{Discrete space-time wave kernels on regular trees}

\author{Amar Ba\v{s}i\'{c}}
\address{Faculty of Electrical Engineering, University of Sarajevo, Zmaja od Bosne bb, Sarajevo, 71000, Bosnia and Herzegovina}
\email{abasic@etf.unsa.ba}

\author{Lejla Smajlovi\'{c}}
\address{Department of Mathematics and Computer Science, University of Sarajevo, Zmaja od Bosne 35, Sarajevo, 71000, Bosnia and Herzegovina}
\email{lejlas@pmf.unsa.ba}

\author{Zenan \v{S}abanac}
\address{Department of Mathematics and Computer Science, University of Sarajevo, Zmaja od Bosne 35, Sarajevo, 71000, Bosnia and Herzegovina}
\email{zsabanac@pmf.unsa.ba}

\date{}

\subjclass[2020]{39A12, 39A22, 35R02, 05C05, 33C10, 47B39}

\keywords{discrete space-time wave equation, wave kernels, discrete Bessel functions, asymptotic behavior, regular trees, graph Laplacian}

\begin{document}

\begin{abstract}
We study the forward discrete space-time wave equation on the homogeneous $(q+1)$-regular tree $T_{q+1}$ associated with a two-parameter generalized Laplacian. Under the natural nonnegativity assumption on this operator, we derive explicit formulas for the two fundamental wave kernels. The formulas are given in terms of discrete $I$-Bessel functions and yield convolution representations for solutions with general initial conditions.

In the boundary case corresponding to the bottom of the spectrum, we obtain another explicit representation of the wave kernel in terms of discrete $J$-Bessel functions. This representation leads to a discrete analogue of the classical $I\!\leftrightarrow\!J$ relation. We also perform both analytic and numerical studies of the asymptotic behavior of the wave kernels, including large radial distance, large time, and large degree of the tree.

An important feature of our analysis is that the wave kernels are expressed as finite sums; hence, the propagation formulas remain finite for every discrete time.
\end{abstract}

\maketitle

\section{Introduction and statement of the main result}\label{sec1}

\subsection{Discrete space-time wave equation}

The classical wave equation, which governs wave propagation at speed $c>0$ in space-time $(x,t)$, is one of the most important partial differential equations and has been thoroughly studied in various space-time settings, as well as with different initial and boundary conditions. In general terms, a wave-type equation is the equation $\frac{\partial^2 }{\partial t^2} w(x;t)=c^2 \Delta_x w(x;t)$, where $\Delta_x$ is the Laplace-Beltrami operator acting on the space variable $x$.

Many studies have been devoted to the so-called semidiscrete settings, meaning settings in which the space variable is discrete and the time variable is continuous. The asymptotic behavior of solutions of the semilinear wave equation on networks was studied in \cite{Ch22}; Chitour, Mazanti, and Sigalotti \cite{CMS16} addressed the question of robustness of wave propagation on networks, while the stabilization of the wave equation on general 1D networks was studied in \cite{VZ09}. Anker, Martinot, Pedon, and Setti \cite{AMPS13} examined shifted wave equations on spaces with similar geometric structures, highlighting analogies with symmetric spaces.

Semidiscrete wave equations belong to a broad class of semidiscrete evolution equations which have recently been extensively studied. For example, these equations with the spatial variable belonging to $\mathbb Z$ have been studied in \cite{G-CLM21, LM-A23, LM-A24}. An interesting operator-theoretic method was developed in \cite{G-CLM21}. Namely, by viewing the underlying operator as a convolution with an element from the Banach algebra $\ell^1(\mathbb Z)$, using properties of Banach algebras and cosine families the authors provided explicit representations of fundamental solutions to those equations and analyzed their qualitative behavior. The convolution-type operators were used in \cite{LM-A23} and \cite{LM-A24} to derive explicit fundamental solutions to a wide class of linear semidiscrete equations including the wave equation (with a spatial variable belonging to $\mathbb Z$). Fractional damped heat and wave equations in continuous time, with the spatial variable belonging to an infinite lattice, were studied in \cite{G-C25}, where a representation formula for the solution was derived.

In this paper, we study the generalized wave equation on $(q+1)$-regular (homogeneous) trees.
The study of wave equations on homogeneous trees has been addressed from various perspectives in the literature. Pagliacci \cite{P93} provided one of the earliest treatments, outlining the basic analytical framework and solutions in the continuous-time setting. Medolla and Setti \cite{MS99} developed a spectral theory framework for the wave equation, emphasizing the role of the Laplacian’s spectral decomposition in constructing and analyzing solutions; see also \cite{Me99} for a follow-up study of the asymptotic equipartition of the energy. Some recent advances related to the wave equation on trees can be found in \cite{TS16} and \cite{CNT23}.

Discretization of the background space-time is a standard procedure in physics. The interested reader is referred to \cite{tH16} and references therein for motivation stemming from quantum-gravitational considerations, \cite{KS22} for the dimensional deconstruction model involving discrete-time heat kernels, to \cite{AG-CR23, CKSS25, DG-CR26} for recent analytical studies of discrete-time heat kernels on discrete structures and to \cite{AAD22} for large time behavior of the discretized heat kernel when the spatial space is $\mathbb R^N$.

\medskip

Time discretization of the wave equation on homogeneous trees was carried out in \cite{CP94} by replacing the second derivative in time with the second-order averaging operator $\Delta_2 f(t)= \frac{1}{2}(f(t+1)-2f(t)+f(t-1))$. Slav\'ik \cite{Slav17, Slav18} studied the wave equation on some discrete spaces in general time scales, while the authors in \cite{BSS24} introduced discrete analogues of Bessel functions and used them to derive explicit representations of solutions to the discrete wave equation on the homogeneous $2$-tree $T_2=\mathbb{Z}$. In \cite{BSS24}, it was further discussed how different discretisations of the second time derivative (forward, backward, or mixed) lead to different time asymptotics of solutions.

\subsection{Our setup} The $(q+1)$-regular homogeneous tree (also known as the Bethe lattice with coordination number $q+1$) is a connected $(q+1)$-regular graph with no cycles. We denote such a tree by $T_{q+1}$.

The combinatorial Laplacian $\Delta_{T_{q+1}}$, acting on functions $f:V(T_{q+1})\to\mathbb{R}$, where $V(T_{q+1})$ denotes the vertex set of the tree, is given by
\begin{equation*}
    \Delta_{T_{q+1}} f(x) = (q+1)\, f(x) - \sum_{x \sim y} f(y),
\end{equation*}
where the second sum is taken over all vertices $y$ connected to $x$ by an edge. 

In this paper, we study the wave equation associated with the generalized Laplacian
$$\Delta_{q+1}^{a,b}:= b\Delta_{T_{q+1}} - aI,$$
where the real parameters $a,\,b$ satisfy $b\neq0$ and 
\begin{equation}\label{eq. main ass a,b}
b(q+1)-a\geq 2|b|\sqrt{q}.
\end{equation}
In view of the fact that the $L^2$-spectrum of the Laplacian on the tree $T_{q+1}$ is purely continuous and equal to $
[q+1-2\sqrt{q},q+1+2\sqrt{q}],
$
(see \cite{MW89}), the assumptions imposed on the parameters $a,\,b$ ensure that $\Delta_{q+1}^{a,b}$ is a semipositive self-adjoint operator. Here and throughout, by semipositive we mean nonnegative self-adjoint, or, equivalently, positive semidefinite operator. Indeed, the $L^2$-spectrum of $\Delta_{q+1}^{a,b}$ is given by
\[
\sigma_2\!\left(\Delta^{a,b}_{q+1}\right)
=
b\,\sigma_2\!\left(\Delta_{q+1}\right)-a
=
\bigl[
b(q+1)-a-2|b|\sqrt q,\;
b(q+1)-a+2|b|\sqrt q
\bigr].
\]
Therefore, condition \eqref{eq. main ass a,b} 
is precisely the condition that the $L^2$-spectrum of $\Delta_{q+1}^{a,b}$ is nonnegative. This condition therefore identifies the natural boundary case $a=b(q+1) -2|b|\sqrt{q}$, in which the lower endpoint of the spectrum is zero.

Parameters $a,\, b$ have a simple interpretation: the coefficient $b$ fixes the spatial coupling strength, or diffusion/wave-speed scale, while the shift $a$ introduces a constant potential or spectral shift. Thus the family $\Delta_{q+1}^{a,b}=b\Delta_{T_{q+1}}-aI$ provides a unified way to treat several natural graph Laplacians and shifted Laplace-type operators within the same discrete wave equation.

Namely, for a $(q+1)$-regular graph $X$, the family $\Delta_{X}^{a,b}=b((q+1)I-A_X)-aI$, where $A_X$ is the adjacency operator and parameters $a,\,b$ satisfy \eqref{eq. main ass a,b} contains, as special cases, the following  Laplace-type operators: the normalized (probabilistic) Laplacian $\Delta_X^{\rm norm} = \frac{1}{q+1}\Delta_X$; the signless Laplacian $\Delta_X^{+}=(q+1)I+A_X= -\Delta_X +2(q+1)I$  studied in \cite{CRS07} and the modified Laplacian $\mathcal L_X=\frac{1}{q+1}\left( \Delta_X - (\sqrt{q}-1)^2I\right)$ studied in \cite{MS99}.

Generalizations of the Laplacian with $b=1$ and special choices $a<0$ can also be found in the study of electrical networks \cite{DS-PB18}. The operator $\Delta_{q+1}^{q+1-2\sqrt{q},1}$ is relevant in physics and called the gapless Laplacian (see, e.g., \cite{BDST26} where it is shown that the scalar field theories associated with the gapless Laplacian on the Bethe lattice exhibit novel critical behavior).

Moreover, the operator $\Delta_{q+1}^{-(\sqrt{q}-1)^2/(2\sqrt{q}), 1/(2\sqrt{q})} $ was studied in equation (16) of \cite{AMPS13}, which in turn can be rewritten in terms of the operator $\mathcal{T}_{q}=\frac{1}{\sqrt{q}}A_{q+1}$ studied in \cite{AS19, BL13}. Constant-potential Schrödinger operators on graphs $X$ of the form $\Delta_X+\lambda I$, which arise naturally in spectral theory and mathematical physics, are recovered as special cases of $\Delta_{q+1}^{a,b}$ by choosing $b=1$ and $a=-\lambda$, where $\lambda \geq -(\sqrt{q}-1)^2$; see, for example, \cite{KS14}. 

\subsection{The generalized wave equation} 
Let $\mathbb{T}$ be a time scale, and let $x_0 \in V(T_{q+1})$ be an arbitrary but fixed vertex, referred to as the \textit{root} of the tree $T_{q+1}$. 
Let $a, b$ be real parameters such that $b\neq 0$ and \eqref{eq. main ass a,b} holds.
The \textit{generalized wave equation}
\begin{equation}\label{wave_eq_disc}
\Delta_{q+1}^{a,b}\, W(x_0, x; t)
\;+\;
\partial_{t,\mathbb{T}}^2 W(x_0, x; t)
= 0
\end{equation}
possesses two linearly independent \textit{fundamental solutions}
$$
W_{q+1,\mathbb{T}}^{a,b},\, V_{q+1,\mathbb{T}}^{a,b}: V(T_{q+1}) \times V(T_{q+1}) \times \mathbb{T} \to \mathbb{R}
$$
subject to two natural initial conditions determined by the time scale $\mathbb{T}$. The functions $W_{q+1,\mathbb{T}}^{a,b}$ and $V_{q+1,\mathbb{T}}^{a,b}$ are called the \textit{wave kernels} on the tree $T_{q+1}$ in the time scale $\mathbb{T}$.\footnote{For a definition of the operator $\partial_{t,\mathbb{T}}$ on a general time scale $\mathbb{T}$, we refer the interested reader to Bohner and Peterson \cite{BP1}, where it is denoted by $f^{\Delta}(t)$.}

In particular, when $\mathbb{T} = \mathbb{R}$, the delta derivative $\partial_{t,\mathbb{T}}$ reduces to the usual derivative, so that $\partial_{t,\mathbb{R}} f(t) = f'(t)$. When $\mathbb{T} = \mathbb{N}_0$, it coincides with the forward difference operator
\[
\partial_{t,\mathbb{N}_0} f(t) = f(t+1) - f(t),
\]
and its second iterate is the second forward difference
\[
\partial_{t,\mathbb{N}_0}^2 f(t) = f(t+2) - 2f(t+1) + f(t).
\]
For brevity, in the case $\mathbb{T} = \mathbb{N}_0$ we will simply write $\partial_t$ instead of $\partial_{t,\mathbb{N}_0}$, since this is the only time derivative we will consider. We will also omit the notation for the time scale $\mathbb{T}$, as throughout this paper we fix $\mathbb{T} = \mathbb{N}_0$.

For $\mathbb{T}=\mathbb{N}_0$, the wave kernels $W_{q+1}^{a,b}$ and $V_{q+1}^{a,b}$ are characterized by the following initial conditions: 
\begin{equation}\label{cond1}
W_{q+1}^{a,b}(x_0, x; 0) =\delta_{x_0}(x):=
\begin{cases}
1, & \text{if } x = x_0, \\
0, & \text{if } x \neq x_0,
\end{cases} \quad \text{and}\quad \partial_t W_{q+1}^{a,b}(x_0, x; 0) = 0,\,\, \forall x\in V(T_{q+1}),
\end{equation}
and
\begin{equation}\label{cond on V}
V_{q+1}^{a,b}(x_0, x; 0) = 0 \quad \text{and}\quad  \partial_t  V_{q+1}^{a,b}(x_0, x; 0) =
\begin{cases}
1, & \text{if } x = x_0, \\
0, & \text{if } x \neq x_0,
\end{cases} \,\, \forall x\in V(T_{q+1}).
\end{equation}

The solutions of \eqref{wave_eq_disc} subject to \eqref{cond1} and \eqref{cond on V}, are the \textit{discrete wave kernels} associated with the operator $\Delta_{q+1}^{a,b}$ on the tree $T_{q+1}$.

By invariance under graph automorphisms and the distance-transitivity of the
$(q+1)$-regular tree, the wave kernels depend only on the distance $r=|x|=d(x_0,x)$ from the root $x_0$; hence, they can be written as $W_{q+1}^{a,b}(r;t)$ and $V_{q+1}^{a,b}(r;t)$, respectively.

\subsection{Main results}

The main result of this paper is Theorem \ref{Tqkernel_a}, in which an explicit expression for the discrete-time wave kernels on $T_{q+1}$ (with a fixed base point $x_0$), associated with the generalized Laplacian $\Delta_{q+1}^{a,b}$, is given in terms of discrete $I$-Bessel functions $I_{n}^{c}(k)$ introduced in Section \ref{sec:int-repr} below\footnote{Discrete Bessel-type functions also appear naturally in explicit solution formulas for related semidiscrete damped and evolution equations; see, for example, \cite{LM-A23,LM-A24}.}.
\begin{theorem}\label{Tqkernel_a}
Let $q\geq 1$ be fixed, let $a,\, b$ be real numbers with $b\neq 0$ satisfying \eqref{eq. main ass a,b}. Set
$$c_{a,b} = \dfrac{2b\sqrt{q}}{b(q+1) - a}, \quad d_{a,b}=b(q+1)-a.$$ 
The solution $W^{a,b}_{q+1}(r;t)$ to the initial value problem \eqref{wave_eq_disc}
satisfying initial conditions \eqref{cond1}
is
\begin{equation}\label{wave_kernel_a}
W^{a,b}_{q+1}(r;t)
=\sum_{k=0}^{\lfloor \frac{t}{2} \rfloor} (-1)^{k+r}\, \binom{t}{2k}\, d_{a,b}^k
\left(
q^{-\tfrac{r}{2}}\, I_r^{c_{a,b}}(k)
- (q - 1) \sum_{\ell=1}^{\lfloor \tfrac{k - r}{2} \rfloor} q^{-\tfrac{r+ 2\ell}{2}}\, I_{r + 2\ell}^{c_{a,b}}(k)
\right).
\end{equation}
The solution $V^{a,b}_{q+1}(r;t)$ of the initial value problem \eqref{wave_eq_disc}
satisfying initial conditions \eqref{cond on V} is
\begin{equation}\label{wave_kernel_b}
V^{a,b}_{q+1}(r;t)
=\sum_{k=0}^{\lfloor \frac{t-1}{2} \rfloor} (-1)^{k+r}\, \binom{t}{2k+1}\, d_{a,b}^k
\left(
q^{-\tfrac{r}{2}}\, I_r^{c_{a,b}}(k)
- (q - 1) \sum_{\ell=1}^{\lfloor \tfrac{k - r}{2} \rfloor} q^{-\tfrac{r+ 2\ell}{2}}\, I_{r + 2\ell}^{c_{a,b}}(k)
\right).
\end{equation}
\end{theorem}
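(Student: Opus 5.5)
The plan is to solve the discrete-time problem at the operator level first, and then compute the radial kernel of the resulting spatial operator powers on $T_{q+1}$ in terms of $I^{c}_n(k)$.

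\textbf{Step 1: reduction to powers of the Laplacian.} Let $E$ denote the forward time shift and write $L=\Delta^{a,b}_{q+1}$. Equation \eqref{wave_eq_disc} becomes $(E-I)^2W=-LW$, that is,
\[
W(\cdot;t+2)=2W(\cdot;t+1)-(I+L)W(\cdot;t).
\]
By linearity and uniqueness of the forward recursion, it suffices to exhibit polynomials in $L$ that satisfy the recursion and the initial data. I would check directly (binomial identities, or formally $E=I\pm\sqrt{-L}$) that
\[
W(\cdot;t)=\sum_{k}\binom{t}{2k}(-L)^k\delta_{x_0},\qquad V(\cdot;t)=\sum_{k}\binom{t}{2k+1}(-L)^k\delta_{x_0}
\]
are solutions. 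The initial data match: $W(0)=W(1)=\delta_{x_0}$ gives \eqref{cond1}, and $V(0)=0$, $V(1)=\delta_{x_0}$ gives \eqref{cond on V}. The recursion itself is the Pascal identity applied to the even and odd parts of $(I+\sqrt{-L})^t$. This already produces the outer sums, the binomial coefficients $\binom{t}{2k}$ and $\binom{t}{2k+1}$, and the factor $(-1)^k$.

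\textbf{Step 2: factoring out $d_{a,b}$.} Write $A$ for the adjacency operator. Then
\[
L=d_{a,b}I-bA=d_{a,b}\Bigl(I-\tfrac{c_{a,b}}{2}\,\tfrac{A}{\sqrt q}\Bigr),
\]
so $(-L)^k\delta_{x_0}=(-1)^k d_{a,b}^k\,P_k$ with $P_k:=\bigl(I-\tfrac{c}{2}A/\sqrt q\bigr)^k\delta_{x_0}$. Assumption \eqref{eq. main ass a,b} gives $|c|\le1$, which is the range in which $I^c_n$ is defined in Section~\ref{sec:int-repr}. It remains to prove that the radial function $P_k(r)$ equals
\[
(-1)^r\Bigl(q^{-r/2}I^c_r(k)-(q-1)\sum_{\ell\ge1}q^{-(r+2\ell)/2}I^c_{r+2\ell}(k)\Bigr).
\]
The sum over $\ell$ is finite because $I^c_n(k)=0$ for $n>k$. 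Only terms with $r+2\ell\le k$ survive, and this gives the upper limit $\lfloor (k-r)/2\rfloor$.

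\textbf{Step 3: verifying the radial formula by induction on $k$ (main obstacle).} I would prove the formula for $P_k(r)$ by induction on $k$, checking it against the radial form of the recursion $P_{k+1}=\bigl(I-\tfrac{c}{2\sqrt q}A\bigr)P_k$. On radial functions $A$ acts by
\[
Af(r)=f(r-1)+qf(r+1)\quad (r\ge1),\qquad Af(0)=(q+1)f(1).
\]
The input is the three-term recursion for discrete $I$-Bessel functions established in Section~\ref{sec:int-repr}, of the form $I^c_n(k+1)=I^c_n(k)+\tfrac{c}{2}\bigl(I^c_{n-1}(k)+I^c_{n+1}(k)\bigr)$, together with $I^c_{-n}=I^c_n$.

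\begin{itemize}
\item The factor $(-1)^r$ converts the $+\tfrac{c}{2}$ in the Bessel recursion into the $-\tfrac{c}{2}$ of the operator. Equivalently, it reflects the parity relation $I^{-c}_n=(-1)^nI^c_n$.
\item The weights $q^{-r/2}$ turn $f(r-1)+qf(r+1)$ into the symmetric $\mathbb Z$-type operator $\sqrt q\,(g(r-1)+g(r+1))$.
\item The correction term $-(q-1)\sum_\ell q^{-(r+2\ell)/2}I^c_{r+2\ell}$ is the standard transfer from $\mathbb Z$ to $T_{q+1}$ (spherical-function / Abel transform inversion). After substitution it telescopes.
\end{itemize}

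I expect the real work to be this bookkeeping of the telescoping sums, especially at the boundary values $r=0$ and $r=1$. There $Af(0)=(q+1)f(1)$, and one must use the symmetry $I_{-1}=I_1$ to absorb the extra $q-1$ term. If the direct induction becomes messy, my fallback is to use the integral representation of $I^c_n(k)$ from Section~\ref{sec:int-repr}, which I expect to be of the form $\frac1\pi\int_0^\pi(1+c\cos\theta)^k\cos n\theta\,d\theta$. Combining it with the spherical Fourier inversion on $T_{q+1}$ would produce the same expression after expanding the spherical function in cosines.
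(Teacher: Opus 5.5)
Your proposal is correct, and Step~3 takes a genuinely different route from the paper. The paper \emph{derives} the formula analytically. For $q\ge2$ it computes the spherical transform of the convolution kernel, $\tilde g_{a,b}(z)=d_{a,b}\bigl(1-c_{a,b}\cos(z\log q)\bigr)$, and applies spherical inversion with the density $\boldsymbol{c}(z)^{-1}$. It then expands $(1-q^{-1}e^{2iu})^{-1}$ as a geometric series, evaluates each term with Proposition~\ref{prop:IJ-int}\,(i), and finishes with a weighted telescoping sum. The case $q=1$ is handled separately by Fourier series on $\mathbb Z$, and $V$ is obtained by summing $W$ in time.

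You instead \emph{verify} the known formula by induction on $k$, and the induction does close. Set $\Phi_k(r)=I^c_r(k)-(q-1)\sum_{\ell\ge1}q^{-\ell}I^c_{r+2\ell}(k)$, so the claim is $P_k(r)=(-1)^rq^{-r/2}\Phi_k(r)$.
\begin{itemize}
\item For $r\ge1$ your substitutions turn the recursion into $\Phi_{k+1}(r)=\Phi_k(r)+\tfrac c2\bigl(\Phi_k(r-1)+\Phi_k(r+1)\bigr)$. This holds termwise by linearity from the Bessel recurrence, so no telescoping is actually needed.
\item At $r=0$ the tree recursion reads $\Phi_{k+1}(0)=\Phi_k(0)+\tfrac c2\tfrac{q+1}{q}\Phi_k(1)$. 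You therefore need exactly $\Phi_k(-1)=q^{-1}\Phi_k(1)$, which follows by splitting off the $\ell=1$ term and using $I^c_{-1}=I^c_1$, as you anticipated.
\end{itemize}

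Your route is elementary and self-contained. It needs no spherical inversion formula or Plancherel constant, treats all $q\ge1$ uniformly, and gives $V$ directly via $\binom{t}{2k+1}$. Your remark that uniqueness is immediate from the explicit forward recursion is also right, and cleaner than how the paper handles uniqueness. Your argument is close in spirit to the direct verification the paper only sketches in the proof of Proposition~\ref{prop. arb param}. The paper's route, by contrast, explains where the correction term $-(q-1)\sum_\ell$ comes from (the density $\boldsymbol{c}(z)^{-1}$), and its intermediate formula \eqref{w_t_1} is reused for the $J$-Bessel representation in Proposition~\ref{prop:boundary} and for the case $d_{a,b}=0$.

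Two small corrections.
\begin{itemize}
\item $I^c_n$ is defined for every $c\in\mathbb C$. Assumption \eqref{eq. main ass a,b} is needed only to guarantee $d_{a,b}\neq0$, so that $c_{a,b}$ is well defined; it does not place $c$ in an admissible range.
\item The three-term recurrence is not proved in Section~\ref{sec:int-repr}. It follows at once from Proposition~\ref{prop:IJ-int}\,(i) by multiplying the integrand by $1+c\cos u$, or from the generating function $\bigl(1+\tfrac c2(z+z^{-1})\bigr)^k$. Either derivation also justifies the even extension $I^c_{-n}=I^c_n$ that your $r=0$ step relies on.
\end{itemize}
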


For each fixed time $t$, the sums in \eqref{wave_kernel_a} and \eqref{wave_kernel_b} are finite. Moreover, since $I_n^c(k)=0$ whenever $n>k$, the kernels have finite propagation in the radial variable: for fixed $t$, only distances $r\leq \lfloor t/2\rfloor$ can contribute to $W_{q+1}^{a,b}(r;t)$, while only distances $r\leq \lfloor (t-1)/2\rfloor$ can contribute to $V_{q+1}^{a,b}(r;t)$. Thus the formulas give effective finite expressions for the wave kernels at every discrete time. In the special case $q=1$, the tree $T_2$ is identified with the integer lattice $\mathbb Z$, the factor $q-1$ makes the second sum vanish, and the formulas specialize to the corresponding one-dimensional discrete-time wave setting previously studied in \cite{BSS24, Slav17,Slav18}.

We study the boundary case $a=b(q+1-2\sqrt{q})$, $b>0$ in greater detail. First, in  Proposition \ref{prop:boundary} below, we derive another expression for the wave kernels in terms of finite sums of the discrete $J$-Bessel functions. Using uniqueness of the wave kernel under given initial conditions, in Corollary  \ref{cor: I and J rel} we prove the identity 
  \begin{equation}\label{discrete_analog}
        J_{2n}^{2c}(t)=\sum_{k=0}^{\lfloor t/2 \rfloor}(-1)^{k+n} (2c^2)^k \binom{t}{2k} I_{n}^{1}(k)
\end{equation}
relating discrete $I$-Bessel and $J$-Bessel functions (defined in \eqref{eq:def-I-Bessel} and \eqref{eq:def-J-Bessel} below), which can be viewed as a discrete analogue of the classical $I\leftrightarrow J$ relation.

In Section \ref{sec. asymptotics} below we study asymptotic properties of the boundary wave kernel $W_{q+1}^{b(q+1-2\sqrt{q}),b}$ and prove that:
\begin{itemize}
    \item For a fixed time $t\in\mathbb{N}_0$ and $q\geq 1$,
$$
\lim_{r\to\infty}W_{q+1}^{b(q+1-2\sqrt{q}),b}(r;t)=0;
$$
\item For a fixed radial distance $r\in\mathbb{N}_0$ and $q\geq 1$, as $t\to\infty$
$$
W_{q+1}^{b(q+1-2\sqrt{q}),b}(r;t)\sim \frac{(-1)^rq^{-r/2}}{\sqrt{\pi t\sin\theta}} C_b(r;t)\left( 1+4b\sqrt{q}
\right) ^{\frac{t}{2}}\cos \left( \left( t+\frac{1}{2}\right)
\theta -\frac{ \pi}{4} \right),
$$
where $\theta\in(0,\pi)$ is defined by $\cos\theta=(1+4b\sqrt{q})^{-1/2}$ and $C_b(r;t)$ is a certain function which is bounded from below as $t\to\infty$;
\item For fixed $r,\, t \in \mathbb{N}_0$ with $t \ge 2r$, as $q \to \infty$,
$$
W^{1-\frac{2\sqrt{q}}{q+1}, \frac{1}{q+1}}_{q+1}(r;t)
= \binom{t}{2r}(q+1)^{-r}\left(1+O\left(\frac{1}{\sqrt{q}}\right)\right),
$$
and in particular, for $r>0$
$$
\lim_{q \to \infty} W^{1-\frac{2\sqrt{q}}{q+1}, \frac{1}{q+1}}_{q+1}(r;t) = 0.
$$
\end{itemize}
We conduct numerical analysis of the behavior of the wave kernel $W_{q+1}^{a,b}$, supported by numerical tables and graphical illustrations, in order to complement the theoretical results and to demonstrate the predicted asymptotic behavior in concrete examples of generalized Laplacian. 

In Proposition \ref{prop. arb param} below, we prove that, for any choice of real numbers $a,\, b$ with $b\neq 0$ solutions to \eqref{wave_eq_disc} with initial conditions \eqref{cond1} and \eqref{cond on V} can be computed explicitly. Therefore, it is not necessary to assume semipositivity of $\Delta_{q+1}^{a,b}$ in order to \emph{solve explicitly} the wave equation. However, in Remark \ref{rem. why semipos} we provide evidence that, when semipositivity is violated, the fundamental solution to \eqref{wave_eq_disc} satisfying the initial condition \eqref{cond1} does not necessarily behave like a \emph{wave}; instead it may exhibit exponential growth, hence the equation \eqref{wave_eq_disc} can no longer be interpreted as a wave equation in the usual sense. Moreover, semipositivity of $\Delta_{q+1}^{a,b}$ implies uniqueness of fundamental solutions of \eqref{wave_eq_disc} with initial conditions \eqref{cond1} and \eqref{cond on V} and is used to deduce the identity \eqref{discrete_analog}.

\subsection{Organization of the paper}
The paper is organized as follows. In Section \ref{sec. wave on tree}, we describe wave kernels on the $(q+1)$-regular tree in terms of the convolution kernel. Section \ref{sec:int-repr} is devoted to the study of discrete $I$-Bessel and $J$-Bessel functions; in particular, we deduce integral representations of those functions which will be used in Section \ref{sec. wave on tree sol} to prove the main theorem and describe explicitly solutions to the wave equation on $T_{q+1}$ with general initial conditions. In Section \ref{sec. applic on tree}, we study the boundary case $a=b(q+1-2\sqrt{q})$, derive a different, explicit expression for the wave kernels in terms of the discrete $J$-Bessel functions and prove \eqref{discrete_analog}. Section \ref{sec. asymptotics} is devoted to the study of the asymptotic behavior of the wave kernels. In the last section, we derive fundamental solutions to the second-order evolution equation \eqref{wave_eq_disc} associated with the generalized Laplacian, which is not semipositive. The numerical tables supporting the graphical illustrations are collected in the Appendix.

\section{Generalized wave operators} \label{sec. wave on tree}

In this section, we study the generalized wave equation \eqref{wave_eq_disc}. First, we identify two wave operators on the tree. Exploiting its symmetry and recursive structure, the analysis can be reduced to radial functions around the base point $x_0$; hence, the wave operators admit a natural description as convolution operators, as described below.

\subsection{Wave operators}

The definition of the discrete-time wave operators is motivated by the series
representation of the discrete cosine function (see \cite[Theorem~3.5.2]{Cu15})
\begin{equation}\label{eq. cos alpha}
\cos_\alpha(t,0)
=
\sum_{k=0}^\infty (-1)^k h_{2k}(t,0)\alpha^{2k}
=
\sum_{k=0}^\infty (-1)^k \alpha^{2k}\binom{t}{2k},
\quad \alpha\in\mathbb C,
\end{equation}
on the time scale $\mathbb{T}=\mathbb{N}_0$, where $h_{k}(t,0) = \binom{t}{k}$, with the convention that $\binom{t}{k}=0$ when $k>t$. The functions $h_k(t,0)$ are the discrete-time monomials (also called time-scale monomials) on the time scale $\mathbb{T}=\mathbb{N}_0$, i.e., the canonical polynomial basis associated with the forward difference operator.

We recall that the discrete cosine function is usually introduced in the time-scale setting through the time-scale exponential; see, for example, \cite[Definition~3.25]{BP1}. While this standard approach involves regressivity assumptions ($\alpha\neq \pm i$), here we accept the non-regressive subscript and use the equation \eqref{eq. cos alpha} as a polynomial extension of the standard discrete cosine. This expression is well-defined for every $\alpha\in\mathbb C$, because the sum is finite for each fixed $t\in\mathbb N_0$.

This leads to the definition of the discrete-time wave operators associated with the generalized Laplacian $\Delta_{q+1}^{a,b}$:
\begin{equation*}
\mathcal{W}_t^{a,b} \;=\; \sum_{k=0}^{\infty} (-1)^k\, h_{2k}(t,0)\, (\Delta_{q+1}^{a,b})^{\,k}, \quad \text{and} \quad \mathcal{V}_t^{a,b} \;=\; \sum_{k=0}^{\infty} (-1)^k\, h_{2k+1}(t,0)\, (\Delta_{q+1}^{a,b})^{\,k},
\end{equation*}
where $(\Delta_{q+1}^{a,b})^{\,k}$ denotes the $k$-th power of the operator $\Delta_{q+1}^{a,b}$. The operators $\mathcal{W}_t^{a,b}$  and $\mathcal{V}_t^{a,b}$ are well-defined, since the sums are finite, and bounded on $L^2(T_{q+1})$ with respect to the operator norm
$\|\cdot\|_{2}$ corresponding to the counting measure.

In particular, the norm $\|\mathcal{W}_t^{a,b}\|_{2}$  on $L^{2}(T_{q+1})$ is bounded by $\left(1+\sqrt{|b(q+1)-a|+2|b|\sqrt{q}}\right)^{t}$ for $t\geq 1$. Namely, 
$$
\big\|\mathcal W_t^{a,b}\big\|_{2}
\;\le\;
\sum_{k=0}^{\lfloor t/2\rfloor}\binom{t}{2k}\,\|\Delta_{q+1}^{a,b}\|_{2}^{\,k}= \sum_{k=0}^{\lfloor t/2\rfloor}\binom{t}{2k}\left(|b(q+1)-a|+2|b|\sqrt{q}\right)^k,
$$
since, as observed above,
$$
\|\Delta_{q+1}^{a,b}\|_{2}=\sup_{\lambda\in\sigma_2\left(\Delta_{q+1}^{a,b}\right)}
|\lambda|= |b(q+1)-a|+2|b|\sqrt{q}.
$$
By separating the even powers in the binomial expansions of $(1+z)^t$ and
$(1-z)^t$, we obtain
\[
\sum_{k=0}^{\lfloor t/2\rfloor}\binom{t}{2k}z^{2k}
=
\frac{(1+z)^t+(1-z)^t}{2}.
\]
Therefore, with $z=\sqrt{\|\Delta_{q+1}^{a,b}\|_{2}}$,
\[
\big\|\mathcal W_t^{a,b}\big\|_2
\leq
\frac{
\bigl(1+\sqrt{\|\Delta_{q+1}^{a,b}\|_{2}}\bigr)^t
+
\bigl(1-\sqrt{\|\Delta_{q+1}^{a,b}\|_{2}}\bigr)^t
}{2}
\leq
\left(1+\sqrt{|b(q+1)-a|+2|b|\sqrt{q}}\right)^t .
\]
Similarly,
\[
\big\|\mathcal V_t^{a,b}\big\|_2
\leq
\sum_{k=0}^{\lfloor (t-1)/2\rfloor}
\binom{t}{2k+1}\|\Delta_{q+1}^{a,b}\|_{2}^{\,k}
=
\sum_{k=0}^{\lfloor (t-1)/2\rfloor}
\binom{t}{2k+1}z^{2k}.
\]
Using the odd-power part of the binomial expansion, we obtain, for $z>0$,
\[
\sum_{k=0}^{\lfloor (t-1)/2\rfloor}
\binom{t}{2k+1}z^{2k}
=
\frac{(1+z)^t-(1-z)^t}{2z}.
\]
For $z=0$, the right-hand side is understood as its limiting value, equal to $t$.
Moreover, by the mean value theorem applied to the function $s\mapsto s^t$
on the interval $[1-z,1+z]$, for $z>0$ and $t\geq1$ we have
\[
\frac{(1+z)^t-(1-z)^t}{2z}
\leq
t(1+z)^{t-1}.
\]
The same estimate is valid for $z=0$ by taking the limiting value.
Consequently, for $t\geq1$,
\[
\big\|\mathcal V_t^{a,b}\big\|_2
\leq
t\left(1+\sqrt{|b(q+1)-a|+2|b|\sqrt{q}}\right)^{t-1}.
\]

Moreover,  $\mathcal{W}_t^{a,b}$  and $\mathcal{V}_t^{a,b}$ are invariant under all automorphisms of $T_{q+1}$ and hence act as convolution operators with radial kernels.

\subsection{Wave kernels}
Let $\nu$ be the normalized radial measure concentrated on the set $\{x \in T_{q+1} : |x| = 1\}$, and let $\delta_0$ be the Dirac measure at zero. In the sequel, we will view $\delta_0$ and $\nu$ both as measures on $\mathbb{N}_0$ such that
$$
\delta_0(n)=\left\{
              \begin{array}{ll}
                1, & \text{  if  } n=0, \\
                0, & \text{otherwise},
              \end{array}
            \right. \quad \text{and} \quad
\nu(n)=\left\{
              \begin{array}{ll}
                \frac{1}{q+1}, & \text{  if  } n=1, \\
                0, & \text{otherwise},
              \end{array} \right.
$$
and also as radial functions on $T_{q+1}$ defined by $\delta_0(x):=\delta_0(d(x,x_0))=\delta_{x=x_0}$ and $\nu(x):=\nu(d(x,x_0))$. With a slight abuse of notation we will identify the vertex set $V(T_{q+1})$ with $T_{q+1}$.

The convolution of two functions $f, g : T_{q+1} \to \mathbb{R}$, where $f$ is a radial function on $T_{q+1}$ at a vertex $x$, is defined by
\begin{equation}\label{eq. def. conv}
(f \ast g)(x) = \sum_{y \in T_{q+1}} f(d(x,y)) \, g(y),
\end{equation}
where $d(x,y)$ denotes the graph distance between $x$ and $y$.
For real numbers $a,b$ with $b\neq 0$ and $x\in T_{q+1}$, we then have
\begin{align*}
(b\Delta_{q+1}-aI)f(x)&= b(q+1)\left(\delta_0 -\nu \right)\ast f(x)-af(x)\\&= (q+1)\left(\left(b-\frac{a}{q+1}\right) \delta_0 -b\nu \right)\ast f(x).
\end{align*}

We define the $k$-fold convolution $(B\delta_0-A\nu)^{(\ast k)}$ inductively, for any integer $k\geq 0$ and real numbers $A,B$ as follows:
\begin{itemize}
  \item for $k=0$ and $x\in T_{q+1}$, we set $(B\delta_0-A\nu)^{(\ast 0)}(x):=\delta_0(x)$;
  \item for $k=1$ and  $x\in T_{q+1}$, we define  $(B\delta_0-A\nu)^{(\ast 1)}(x):=(B\delta_0-A\nu)(x)$;
\item for $k\geq 2$ and $x\in T_{q+1}$
$$(B\delta_0-A\nu)^{(\ast k)}(x):=((B\delta_0-A\nu)^{(\ast (k-1))} \ast (B\delta_0-A\nu) )(x).$$
\end{itemize}

Since the wave kernels are radial functions, for a vertex $x\in T_{q+1}$ we denote by $|x|$ its distance from the root $x_0$ and write the wave kernels $W_{q+1}^{a,b}(x_0,x;t)$ and $V_{q+1}^{a,b}(x_0,x;t)$ on the tree $T_{q+1}$ simply as $W_{q+1}^{a,b}(|x|;t)$  and $V_{q+1}^{a,b}(|x|;t)$. We have the following lemma.
\begin{lemma} \label{lem: kernels on tree}
For any real parameters $a,\, b$, with $b\neq 0$, the functions
\begin{equation}\label{wave w as conv}
W_{q+1}^{a,b}(|x|;t) = \sum_{k=0}^{\infty} (-1)^k h_{2k}(t,0)\,(q+1)^k \left(\left(b-\frac{a}{q+1}\right) \delta_0 -b\nu \right)^{(*k)}(x),\,\, x\in T_{q+1},
\end{equation}
and
\begin{equation} \label{wave V as conv}
V_{q+1}^{a,b}(|x|;t) = \sum_{k=0}^{\infty} (-1)^k h_{2k+1}(t,0)\,(q+1)^k \left(\left(b-\frac{a}{q+1}\right) \delta_0 -b\nu \right)^{(*k)}(x),\,\, x\in T_{q+1},
\end{equation}
satisfy equation \eqref{wave_eq_disc} and the initial conditions \eqref{cond1} and \eqref{cond on V}, respectively.
\end{lemma}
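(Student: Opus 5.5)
The plan is to verify the equation and the initial conditions directly, using the convolution representation of $\Delta_{q+1}^{a,b}$ and the discrete monomials $h_k(t,0)=\binom{t}{k}$. Set $\mu:=(q+1)\bigl((b-\tfrac{a}{q+1})\delta_0-b\nu\bigr)$. This is a radial function, and by the computation preceding the lemma we have $\Delta_{q+1}^{a,b}f=\mu\ast f$ for every $f$. The first step is to check that convolution with radial functions on $T_{q+1}$ is associative and commutative in the needed sense; this holds because convolution by $\nu$ is the averaging operator, which commutes with itself. Consequently $\mu^{(\ast k)}=(q+1)^k\bigl((b-\tfrac{a}{q+1})\delta_0-b\nu\bigr)^{(\ast k)}$ and $\mu^{(\ast k)}\ast\delta_0=(\Delta_{q+1}^{a,b})^k\delta_{x_0}$. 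In other words, the right-hand sides of \eqref{wave w as conv} and \eqref{wave V as conv} are $\mathcal W_t^{a,b}\delta_{x_0}$ and $\mathcal V_t^{a,b}\delta_{x_0}$. All sums are finite, because $h_{2k}(t,0)=0$ for $2k>t$, so interchanging sums with the operator causes no trouble.

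Next comes the time-derivative computation. On $\mathbb N_0$ Pascal's rule gives $\partial_t h_k(t,0)=\binom{t+1}{k}-\binom{t}{k}=\binom{t}{k-1}=h_{k-1}(t,0)$ for $k\ge1$, and $\partial_t h_0=0$. Iterating, $\partial_t^2 h_{2k}=h_{2k-2}$ and $\partial_t^2 h_{2k+1}=h_{2k-1}$ for $k\ge1$, while the $k=0$ terms are annihilated. Applying this termwise, with $\mu^{(\ast k)}$ independent of $t$, I will get
\[
\partial_t^2 W_{q+1}^{a,b}(\cdot;t)=\sum_{k\ge1}(-1)^k h_{2k-2}(t,0)\,\mu^{(\ast k)}=-\sum_{j\ge0}(-1)^j h_{2j}(t,0)\,\mu^{(\ast(j+1))}.
\]
Since $\mu^{(\ast(j+1))}=\mu\ast\mu^{(\ast j)}$, the last expression equals $-\mu\ast W_{q+1}^{a,b}(\cdot;t)=-\Delta_{q+1}^{a,b}W_{q+1}^{a,b}(\cdot;t)$, which is \eqref{wave_eq_disc}. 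The same index shift works for $V_{q+1}^{a,b}$ with $h_{2k+1}$.

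For the initial conditions, note that $h_k(0,0)=\delta_{k0}$, so $W(\cdot;0)=\mu^{(\ast0)}=\delta_0$ and $V(\cdot;0)=0$, because $h_{2k+1}(0,0)=0$. Also $\partial_tW(\cdot;0)=\sum_{k\ge1}(-1)^kh_{2k-1}(0,0)\mu^{(\ast k)}=0$, and $\partial_tV(\cdot;0)=\sum_k(-1)^kh_{2k}(0,0)\mu^{(\ast k)}=\delta_0$. These are exactly \eqref{cond1} and \eqref{cond on V}.

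The main obstacle, mild as it is, is justifying that $\Delta_{q+1}^{a,b}$ acting in the space variable on the radial function $\mu^{(\ast k)}(|x|)$ equals $\mu\ast\mu^{(\ast k)}$ as radial functions. The inductive definition convolves on the right, $\mu^{(\ast(k-1))}\ast\mu$, whereas the operator acts as $\mu\ast(\cdot)$. I would handle this by checking directly that convolution with $\nu$ maps radial functions to radial functions via the spherical recursion $(\nu\ast f)(n)=\tfrac{1}{q+1}f(n-1)+\tfrac{q}{q+1}f(n+1)$ for $n\ge1$, and $(\nu\ast f)(0)=f(1)$. Since $\mu^{(\ast k)}$ is then a polynomial in the single averaging operator applied to $\delta_0$, the order of convolution is immaterial.
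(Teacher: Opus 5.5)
Your proposal is correct and follows the paper's proof. Both apply $\partial_t^2 h_{2k}(t,0)=h_{2k-2}(t,0)$ (and its odd analogue) term by term, shift the index to factor out one convolution with $(q+1)\bigl((b-\frac{a}{q+1})\delta_0-b\nu\bigr)$, and read off the initial conditions from $h_k(0,0)=\delta_{k0}$. Your extra check that left and right convolution of radial functions agree (via the spherical recursion for $\nu\ast f$) justifies a step the paper uses without comment, when it rewrites $(\cdot)^{(\ast(k+1))}$ as $(\cdot)\ast(\cdot)^{(\ast k)}$.
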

\begin{proof}
First, note that the coefficients $h_{2k}(t,0)=\binom{t}{2k}$ satisfy the binomial difference identity
$$
\partial_t^2 h_{2k}(t,0)=h_{2k-2}(t,0)\quad(k\ge1), \qquad \partial_t^2 h_0(t,0)=0.
$$
For brevity, set $B=b-\frac{a}{q+1}$. Since the sums defining $W_{q+1}^{a,b}(|x|;t)$ and $V_{q+1}^{a,b}(|x|;t)$ are finite for fixed $t$, we may interchange the summation with the action of the operators $\partial_t$ and $\Delta_{q+1}$, which yields 
\begin{equation*}
\begin{split}
\partial_t^2 W_{q+1}^{a,b}(|x|;t)
&= \sum_{k=1}^{\infty}(-1)^k h_{2k-2}(t,0)(q+1)^k \left(B\delta_0 -b\nu \right)^{(*k)}(x)\\
&=-\sum_{k=0}^{\infty}(-1)^k h_{2k}(t,0)(q+1)^{k+1} \left( B \delta_0 -b\nu \right)^{(\ast(k+1))}(x)\\
&=
-(q+1)\left(B \delta_0 -b\nu \right)\ast \sum_{k=0}^{\infty} (-1)^k h_{2k}(t,0)\,(q+1)^k \left(B\delta_0 -b\nu \right)^{(*k)}(x) \label{eq. wave eq. prop}\\
&=
-\Delta_{q+1}^{a,b}W_{q+1}^{a,b}(|x|;t).
\end{split}
\end{equation*}
Hence, $W_{q+1}^{a,b}(|x|;t)$ satisfies \eqref{wave_eq_disc}.

\medskip
At $t=0$, we have $h_0(0,0)=1$ and $h_{\ell}(0,0)=0$ for all $\ell\ge1$, so
$$
W_{q+1}^{a,b}(|x|;0)=\left( B\delta_0 - b\nu \right)^{(*0)}(x)= \delta_{x=x_0}(x)
$$
and
$$
\partial_t W_{q+1}^{a,b}(|x|;0)=\sum_{k=1}^{\infty} (-1)^k h_{2k-1}(0,0)\,(q+1)^k \left( B\delta_0 - b\nu \right)^{(*k)}(x)\equiv 0.
$$

A similar argument shows that $V_{q+1}^{a,b}(|x|;t)$ satisfies \eqref{wave_eq_disc}. To show that the initial conditions \eqref{cond on V} are satisfied, we use that $h_{2k+1}(0,0)=0$ for all $k\geq 0$ to see that $V_{q+1}^{a,b}(|x|;0)=0$, while delta differentiation of $V_{q+1}^{a,b}$ with respect to $t$ yields that
$$
\partial_t V_{q+1}^{a,b}(|x|;0)=W_{q+1}^{a,b}(|x|;0)=\delta_0(x),
$$
which proves \eqref{cond on V}.
\end{proof}

\section{Integral representation of discrete Bessel functions}\label{sec:int-repr}

The discrete $I$-Bessel and $J$-Bessel functions, introduced in \cite{Cu15} and further generalized and studied in \cite{BC, Slav18, KS22, CHJSV, BSS24}, for $t,n\in\mathbb{N}_0$ (with $n\le t$) and $c\in\mathbb{C}$ can be expressed as 
\begin{align}
I^{c}_{n}(t)
&= \sum_{\ell=0}^{\lfloor (t-n)/2\rfloor}
\frac{t!}{\ell!\,(t-2\ell-n)!\,(n+\ell)!}\left(\frac{c}{2}\right)^{2\ell+n},\label{eq:def-I-Bessel}\\
J^{c}_{n}(t)
&= \sum_{\ell=0}^{\lfloor (t-n)/2\rfloor}
\frac{(-1)^{\ell}t!}{\ell!\,(t-2\ell-n)!\,(n+\ell)!}\,\left(\frac{c}{2}\right)^{2\ell+n}.\label{eq:def-J-Bessel}
\end{align}
Note that $I^{c}_{n}(t)= J^{c}_{n}(t)=0$ for all $c\in\mathbb{C}$ and $t,n\in\mathbb{N}_0$ such that $n> t$. 

In this section, we prove discrete analogues of the classical integral representations for the $I$-Bessel and $J$-Bessel functions, which will play an important role in the later analysis. Namely, we prove the following proposition.

\begin{proposition}\label{prop:IJ-int}
Let $n,k\in\mathbb{N}_0$ and let $\alpha\in \mathbb{R} \setminus \{0\}$. Then:
\begin{enumerate}[label=\textup{(\roman*)}]
\item \label{prop:IJ-int-i} $\displaystyle
\frac{1}{2\pi}\int_{-\pi}^{\pi} (1+\alpha\cos u)^{k}\,e^{inu}\,du
=
\begin{cases}
I_{n}^{\alpha}(k), & \text{if } k\ge n,\\[4pt]
0, & \text{otherwise,}
\end{cases}$

\item \label{prop:IJ-int-ii} $\displaystyle
\frac{1}{2\pi}\int_{-\pi}^{\pi} (1-i\alpha\sin u)^{k}\,e^{inu}\,du
=
\begin{cases}
J_{n}^{\alpha}(k), & \text{if } k\ge n,\\[4pt]
0, & \text{otherwise.}
\end{cases}$
\end{enumerate}
\end{proposition}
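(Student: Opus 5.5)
Both identities follow from expanding the integrand as a finite trigonometric polynomial and then using orthogonality, $\frac{1}{2\pi}\int_{-\pi}^{\pi} e^{imu}\,du=\delta_{m,0}$ for $m\in\mathbb{Z}$. Since every sum involved is finite, no convergence issues arise.

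For (i), I write $\cos u=\tfrac12(e^{iu}+e^{-iu})$. The binomial theorem gives
\[
(1+\alpha\cos u)^k=\sum_{j=0}^{k}\binom{k}{j}\Bigl(\frac{\alpha}{2}\Bigr)^{j}\sum_{p=0}^{j}\binom{j}{p}e^{i(2p-j)u}.
\]
After multiplying by $e^{inu}$ and integrating, only the terms with $2p-j=-n$ survive, that is, $j=n+2p'$ with $p'\ge 0$. Concretely, set $p=\ell$ and $j=n+2\ell$, so that $j-p=n+\ell$; then the exponent is $2\ell-(n+2\ell)=-n$, as required. The surviving coefficient is
\[
\binom{k}{n+2\ell}\binom{n+2\ell}{\ell}\Bigl(\frac{\alpha}{2}\Bigr)^{n+2\ell}
=\frac{k!}{(k-n-2\ell)!\,\ell!\,(n+\ell)!}\Bigl(\frac{\alpha}{2}\Bigr)^{n+2\ell}.
\]
The constraint $j\le k$ forces $\ell\le\lfloor (k-n)/2\rfloor$. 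Summing over $\ell$ reproduces \eqref{eq:def-I-Bessel} with $t=k$ and $c=\alpha$. When $n>k$, no admissible $j\le k$ exists, so the integral vanishes. This gives the ``otherwise'' case. I would also remark that $n\ge0$ is used here, since it guarantees $j=n+2\ell\ge0$.

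For (ii), the same computation goes through with $-i\alpha\sin u=-\tfrac{\alpha}{2}(e^{iu}-e^{-iu})=\tfrac{\alpha}{2}(e^{-iu}-e^{iu})$. The expansion now carries an extra sign $(-1)^{p}$ on the factor $e^{iu}$. With the same indexing (the $e^{iu}$ factor is chosen $\ell$ times and the $e^{-iu}$ factor $n+\ell$ times), the sign becomes $(-1)^{\ell}$. This yields exactly \eqref{eq:def-J-Bessel}. Alternatively, I could substitute $u\mapsto u+\pi/2$ in (i) with $\alpha$ replaced by $-i\alpha$: this turns $\cos u$ into $-\sin u$ and introduces the factor $e^{-in\pi/2}=(-i)^n$, and I would then check that the powers of $i$ combine correctly. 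The direct expansion is cleaner, so I will use it.

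The only delicate point is the bookkeeping of indices and signs. Specifically, I need to choose which exponential carries the $\ell$ copies, so that the factor $e^{inu}$ cancels rather than doubles, and to confirm that the sign in (ii) comes out as $(-1)^\ell$ and not $(-1)^{n+\ell}$. The hypothesis $\alpha\neq0$ plays no essential role beyond matching the paper's convention.
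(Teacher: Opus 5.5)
Your proof is correct and is essentially the paper's argument. The paper first isolates the single-power integrals $\frac{1}{2\pi}\int_{-\pi}^{\pi}\cos^{j}u\,e^{inu}\,du$ and $\frac{1}{2\pi}\int_{-\pi}^{\pi}\sin^{j}u\,e^{inu}\,du$ in a lemma, using exactly your exponential expansion and orthogonality, then applies the binomial theorem in $j$ and reindexes $j=n+2\ell$; you instead fold both steps into one double sum, and your sign $(-1)^{\ell}$ in (ii) agrees with the paper's factor $(-1)^{(3j+n)/2}$ evaluated at $j=n+2\ell$.
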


Before proving Proposition \ref{prop:IJ-int}, let us first explain why it can be viewed as the discrete-time analogue of the integral representations
$$
\begin{aligned}
\mathcal I_\nu(\alpha) &= \frac{1}{\pi}\int_{0}^{\pi} e^{\,\alpha\cos u}\,\cos(\nu u)\,du= \frac{1}{2\pi}\int_{-\pi}^{\pi} e^{\,\alpha\cos u}e^{\,i\nu u}\,du,\\
\mathcal J_\nu(\alpha) &= \frac{1}{\pi}\int_{0}^{\pi}\cos\!\big(\nu u - \alpha\sin u\big)\,du
= \frac{1}{2\pi}\int_{-\pi}^{\pi} e^{\,-i \alpha\sin u}e^{\,i\nu u}\,du,
\end{aligned}
$$
of the classical $I$-Bessel function $\mathcal I_\nu(\alpha)$ and $J$-Bessel function $\mathcal J_\nu(\alpha) $ (see \cite[8.431.5 and 8.411.1]{GR}), valid for $\nu\in\mathbb{N}_0$ and $z\in\mathbb{C}$.

Namely, in the integral representations of $\mathcal I_\nu(\alpha)$ and $\mathcal J_\nu(\alpha) $, the integrands contain exponential factors of the form $e^{\pm \alpha\cos u}$ and $e^{\pm i \alpha\sin u}$. The exponential function on the time scale $\mathbb{T}=\mathbb{N}_0$ is
$$
e_\beta(t,0)=(1+\beta)^{t}
$$
(see \cite{Cu15}, formulas (49)--(52) with $h=1$  for further details and equivalent forms). Therefore, $$(1+\alpha\cos u)^{k}=e_{\alpha \cos u}(k,0),$$ meaning that it is a discrete-time ``version'' of the function $e^{\,\alpha \cos u}$. Similarly, $$(1-i\alpha\sin u)^{k}= e_{-i\alpha \sin u}(k,0),$$ hence the function $(1-i\alpha\sin u)^{k}$ is a discrete-time ``version'' of the function $ e^{\,-i \alpha\sin u}$.

The proof of Proposition~\ref{prop:IJ-int} will rely on the following two auxiliary integral formulas, given in the next lemma.

\begin{lemma}\label{lem:integrali}
For $j,n\in\mathbb{N}_0$, the following holds:
\begin{enumerate}[label=\textup{(\alph*)}]
  \item $\displaystyle \frac{1}{2\pi}\!\int_{-\pi}^{\pi}\cos^{j}u\,e^{inu}\,du
  =
  \begin{cases}
    \displaystyle \frac{1}{2^{j}}\binom{j}{\frac{j-n}{2}}, & \text{if } j\ge n \text{ and } j-n \text{ is even},\\[6pt]
    0, & \text{otherwise},
  \end{cases}$

  \item $\displaystyle \frac{1}{2\pi}\!\int_{-\pi}^{\pi}\sin^{j}u\,e^{inu}\,du
  =
  \begin{cases}
    \displaystyle \frac{(-1)^{\frac{j+n}{2}}}{(2i)^{\,j}}\binom{j}{\frac{j-n}{2}}, & \text{if } j\ge n \text{ and } j-n \text{ is even},\\[6pt]
    0, & \text{otherwise}.
  \end{cases}$
\end{enumerate}
\end{lemma}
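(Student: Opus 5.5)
The plan is to expand the powers of cosine and sine into exponentials with the binomial theorem and then use orthogonality of the characters $e^{imu}$ on $[-\pi,\pi]$. This reduces each integral to extracting a single binomial coefficient.

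For (a), write $\cos u=\frac{e^{iu}+e^{-iu}}{2}$. The binomial theorem gives
\[
\cos^j u=\frac{1}{2^j}\sum_{m=0}^{j}\binom{j}{m}e^{i(j-2m)u}.
\]
Multiply by $e^{inu}$ and integrate term by term, using $\frac{1}{2\pi}\int_{-\pi}^{\pi}e^{ipu}\,du=\delta_{p,0}$ for $p\in\mathbb{Z}$. Only terms with $j-2m+n=0$ survive, that is $m=\frac{j+n}{2}$. Such an index $m$ exists in $\{0,\dots,j\}$ exactly when $j+n$ is even (equivalently $j-n$ is even) and $n\le j$; since $n\ge0$, the condition $m\ge0$ is automatic. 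The surviving term contributes $2^{-j}\binom{j}{(j+n)/2}$, and the symmetry $\binom{j}{m}=\binom{j}{j-m}$ turns this into $2^{-j}\binom{j}{(j-n)/2}$. Otherwise the integral vanishes.

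For (b), proceed the same way with $\sin u=\frac{e^{iu}-e^{-iu}}{2i}$, which gives
\[
\sin^j u=\frac{1}{(2i)^j}\sum_{m=0}^{j}\binom{j}{m}(-1)^m e^{i(j-2m)u}.
\]
The same orthogonality argument selects $m=\frac{j+n}{2}$ under the same parity and size conditions. This yields the sign $(-1)^{(j+n)/2}$ together with $\binom{j}{(j+n)/2}=\binom{j}{(j-n)/2}$, divided by $(2i)^j$.

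There is no real obstacle here. The only care needed is the bookkeeping: checking that the selected index lies in range, which is what produces the conditions $j\ge n$ and $j-n$ even, and tracking the sign in (b). The interchange of the integral with the sum is trivial because the sums are finite.
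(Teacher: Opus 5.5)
Your proof is correct and is essentially the paper's argument: expand $\cos^j u$ into the exponentials $e^{\pm iu}$, use orthogonality to isolate the index $m=\frac{j+n}{2}$ (which gives the conditions $j\ge n$ and $j-n$ even), and finish with $\binom{j}{m}=\binom{j}{j-m}$. You also work out part (b) explicitly, with the sign $(-1)^{(j+n)/2}$ coming from $(-e^{-iu})^m$, whereas the paper only states that (b) follows analogously.
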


\begin{proof}[Proof of Lemma~\ref{lem:integrali}] We could not find an explicit reference for these formulas; therefore, we include a short proof for completeness, although the result is likely well known.

\textbf{(a)} Let
$$
I=\frac{1}{2\pi}\int_{-\pi}^{\pi}\cos^{j}u\,e^{inu}\,du.
$$
Writing the cosine function in terms of exponentials, we obtain
$$
I=\frac{1}{2\pi}\int_{-\pi}^{\pi}\Bigg[\frac{1}{2^{j}}
\sum_{k=0}^{j}\binom{j}{k}e^{\,i(j-2k)u}\Bigg]e^{inu}\,du
=\frac{1}{2^{j}}\sum_{k=0}^{j}\binom{j}{k}\,
\frac{1}{2\pi}\int_{-\pi}^{\pi}e^{\,i(n+j-2k)u}\,du.
$$
The orthogonality of exponentials implies that the only nonzero contribution in the integral above occurs when $k=\frac{j+n}{2}$, which requires
$j\ge n$ and $j-n$ even. Therefore,
\[
I=\frac{1}{2^{j}}\binom{j}{\frac{j+n}{2}}
=\frac{1}{2^{j}}\binom{j}{\frac{j-n}{2}},
\]
which proves part (a). Part (b) is deduced analogously and is therefore omitted.
\end{proof}

We now proceed with the proof of Proposition \ref{prop:IJ-int}.

\begin{proof}[Proof of Proposition~\ref{prop:IJ-int}]

\textbf{(i)} Let us introduce the notation
$$
M(n,k;\alpha)=\frac{1}{2\pi}\int_{-\pi}^{\pi}(1+\alpha\cos u)^{k}\,e^{inu}\,du.
$$
Applying the binomial formula, 
and using Lemma~\ref{lem:integrali}\,(a), we get
$$
M(n,k;\alpha)=\sum_{j=0}^{k}\binom{k}{j}\,\alpha^{j}\,\frac{1}{2^{j}}
\binom{j}{\frac{j-n}{2}},
\qquad \text{with $j\ge n$ and $j-n$ even.}
$$
We transform the above sum by introducing the substitution $j-n=2\ell$ with $\ell\in\mathbb{N}_0$.
Since $0\le j\le k$ and we keep only terms with $j\ge n$ and $j-n$ even,
it follows that $\ell=0,1,\dots,\left\lfloor\frac{k-n}{2}\right\rfloor$.
Then $j=n+2\ell$, hence using \eqref{eq:def-I-Bessel} we obtain
\begin{align*}
M(n,k;\alpha)&=\sum_{\ell=0}^{\left\lfloor\frac{k-n}{2}\right\rfloor}
\binom{k}{\,n+2\ell\,}\binom{n+2\ell}{\,\ell\,}
\left(\frac{\alpha}{2}\right)^{2\ell+n}
=
\sum_{\ell=0}^{\left\lfloor\frac{k-n}{2}\right\rfloor}
\frac{k!}{\ell!\,(k-2\ell-n)!\,(n+\ell)!}\,
\left(\frac{\alpha}{2}\right)^{2\ell+n}\\
&=I_{n}^{\alpha}(k).
\end{align*}

\textbf{(ii)} Let
$$
\tilde M(n,k;\alpha)=\frac{1}{2\pi}\int_{-\pi}^{\pi}(1-i\alpha\sin u)^{k}\,e^{inu}\,du.
$$
Proceeding analogously as in the proof of part (i), we obtain
$$
\tilde M(n,k;\alpha)=\sum_{j=0}^{k}\binom{k}{j}\,i^{\,j}(-1)^{j}\alpha^{j}\,
\frac{(-1)^{\frac{j+n}{2}}}{(2i)^{\,j}}
\binom{j}{\frac{j-n}{2}}
=\sum_{j=0}^{k}\binom{k}{j}\,\alpha^{j}\,
\frac{(-1)^{\frac{3j+n}{2}}}{2^{\,j}}
\binom{j}{\frac{j-n}{2}}.
$$
Introducing the substitution $j-n=2\ell$ with $\ell\in\mathbb{N}_0$, we have
$\ell=0,1,\dots,\left\lfloor\frac{k-n}{2}\right\rfloor$ and $j=n+2\ell$. Therefore,
\[
\tilde M(n,k;\alpha) =\sum_{\ell=0}^{\left\lfloor\frac{k-n}{2}\right\rfloor}
\frac{(-1)^{\,\ell}k!}{\ell!\,(k-2\ell-n)!\,(n+\ell)!}\,
\left(\frac{\alpha}{2}\right)^{2\ell+n}=\,J_{n}^{\alpha}(k),
\]
which completes the proof of the proposition.
\end{proof}

\section{Wave equation on the $(q+1)$-regular tree}\label{sec. wave on tree sol}

Let $a$ and $b$ be real numbers such that $b\neq 0$ and $b(q+1)-a\geq 2|b|\sqrt{q}$.  Let $T_{q+1}$ be a rooted tree with root $x_0$, and let  $r=|x|$ denote, as before, the distance from the root $x_0$.

In this section, for two given functions $w, v: T_{q+1}\to\mathbb{R}$ that belong to $L^2(T_{q+1})$, we study the following initial value problem: Find a function $w_{q+1}^{a,b}(x;t)$ that is a solution to  \eqref{wave_eq_disc} with $\mathbb{T}=\mathbb{N}_0$ satisfying the initial conditions
\begin{equation}\label{initial cond general}
w_{q+1}^{a,b}(x;0)=w(x),\quad \text{and}\quad \partial_t w_{q+1}^{a,b}(x;0)= v(x),\,\,\, x\in T_{q+1} .
\end{equation}

\subsection{Fundamental solutions to the wave equation}

To solve the general initial value problem on the tree, we first derive explicit formulas for the discrete space-time wave kernels, meaning that we prove Theorem \ref{Tqkernel_a}.

We assume $q\geq 1$ to be fixed and recall the notation
$$c_{a,b} = \dfrac{2b\sqrt{q}}{b(q+1) - a}, \quad\, d_{a,b}=b(q+1)-a.$$
Equation \eqref{wave_eq_disc}, with $\mathbb{T}=\mathbb{N}_0$, is equivalent to the following radial equation:
\begin{equation}\label{eq. wave on tree}
\partial_t^2 W(r;t) =
\begin{cases}
-\big(b(q+1)-a\big)\,W(0;t) + b(q+1)\,W(1;t), & \text{if } r = 0, \\[6pt]
-\big(b(q+1)-a\big)\,W(r;t) + bq\,W(r+1;t) + bW(r-1;t), & \text{if } r \geq 1.
\end{cases}
\end{equation}

\begin{proof}[Proof of Theorem~\ref{Tqkernel_a}] 
We prove \eqref{wave_kernel_a}; the proof of \eqref{wave_kernel_b} follows from the fact that
$V_{q+1}^{a,b}(r;t)$ is the integral of the solution $W_{q+1}^{a,b}(r;t)$ from zero to $t$ in the time scale $\mathbb{T}=\mathbb{N}_0$, meaning that for $t\geq 1$ one has
\begin{equation}\label{eq. V as int of W}
    V_{q+1}^{a,b}(r;t)=\sum_{m=0}^{t-1}W_{q+1}^{a,b}(r;m).
\end{equation}
We will prove \eqref{wave_kernel_a} by proving that expressions  \eqref{wave w as conv} and \eqref{wave_kernel_a} are equal for $x\in T_{q+1}$ with $|x|=r$. 

The case $q=1$ is included in our notation, but it requires a separate argument in the proof. In this case the
$(q+1)$-regular tree is the $2$-regular tree $T_2$, which is identified
with the integer lattice $\mathbb Z$. Thus, for $q\geq2$ we use the spherical Fourier transform on homogeneous trees, while for $q=1$ we use the classical Fourier transform on
$\mathbb Z$.

Assume first that $q\ge2$. In this case, we may use the spherical Fourier transform on the tree. The convolution kernel of the operator $\Delta^{a,b}_{T_{q+1}}=b\Delta_{q+1}-aI$ is
$$
g_{a,b}(x):=\big((b(q+1)-a)\delta_0 - b(q+1)\nu\big)(x)
=\begin{cases}
b(q+1)-a, & |x|=0,\\
-b, & |x|=1,\\
0, & \text{otherwise}.
\end{cases}
$$
Therefore,  the spherical Fourier transform of the convolution kernel $g_{a,b}$ is given by
\begin{equation}\label{ft}
\begin{split}
\tilde{g}_{a,b}\left(z\right)&= \sum_{x \in T_{q+1}}g_{a,b}\left(x\right) \cdot \phi_z(x)=(b(q+1)-a)\phi_z(0)- b(q+1)\phi_z(1)\\
&= b(q+1)-a - b(q+1)\left(\boldsymbol{c}(z) q^{iz -\frac{1}{2}} +\boldsymbol{c}(-z) q^{-iz - \frac{1}{2}}\right),
\end{split}
\end{equation}
where spherical harmonics $\phi_z(x)$ (meaning the radial eigenfunctions of the Laplace operator, satisfying the normalization condition $\phi_z(0)=1$) are given by
\[\phi_z(x) =\boldsymbol{c}(z) q^{(iz - \frac{1}{2})|x|} +\boldsymbol{c}(-z) q^{(-iz - \frac{1}{2})|x|}, \  \forall z \in \mathbb{C} \setminus (\tau/2) \mathbb{Z},
\]
where $\tau=\frac{2\pi}{\log{q}}$ and $\boldsymbol{c}$ is the meromorphic function defined by
$$\boldsymbol{c}(z) = \frac{\sqrt{q}}{q+1} \frac{q^{\frac{1}{2} + iz} - q^{-\frac{1}{2} - iz}}{q^{iz} - q^{-iz}}, \quad \forall z \in \mathbb{C} \setminus (\tau/2) \mathbb{Z}$$
(see Section 2 of \cite{MS99} for details).

Since
\begin{equation*}
\boldsymbol{c}(z) q^{iz -\frac{1}{2}}=\frac{1}{q+1} \cdot \frac{q^{\frac{1}{2}+2iz}-q^{-\frac{1}{2}}}{q^{iz}-q^{-iz}} \quad \text{and}  \quad \boldsymbol{c}(-z) q^{-iz -\frac{1}{2}}=\frac{1}{q+1} \cdot \frac{q^{\frac{1}{2}-2iz}-q^{-\frac{1}{2}}}{q^{-iz}-q^{iz}},
\end{equation*}
from \eqref{ft}, a simple calculation yields that
\begin{equation} \label{conv_kernel_transform}
\tilde{g}_{a,b}\left(z\right)=d_{a,b}\Big[\,1- c_{a,b}\cos\!\big(z\log{q}\big)\,\Big].
\end{equation}

Now, using \eqref{ft} and \eqref{conv_kernel_transform}, we obtain that the spherical Fourier transform of the wave kernel is given by
\begin{equation*}
\tilde{W}^{a,b}_{q+1,t}\left(z\right)
= \sum_{k=0}^{\infty}(-1)^k \, h_{2k}(t,0)\, d_{a,b}^k\big(1- c_{a,b}\cos(z\log q)\big)^{k},
\end{equation*}
where the sum on the right-hand side is finite, because $h_{2k}(t,0)=0$ for $k>t/2$. The spherical inversion yields that
\begin{equation}\label{inverse1}
    \begin{split}
W^{a,b}_{q+1}\left(|x|;t\right)&=2c_G \int_{-\frac{\pi}{\log{q}}}^{\frac{\pi}{\log{q}}} \tilde{W}^{a,b}_{q+1,t}\left(z\right) \left(\boldsymbol{c}(z)\right)^{-1} q^{(-iz - \frac{1}{2})|x|} \,dz \\
&=2c_G\sum_{k=0}^{\lfloor \frac{t}{2} \rfloor} (-1)^k h_{2k}\left(t,0\right)\,d_{a,b}^k
\int_{-\frac{\pi}{\log{q}}}^{\frac{\pi}{\log{q}}}\!\Big(1- c_{a,b}\cos(z\log q)\Big)^{k} \left(\boldsymbol{c}(z)\right)^{-1}  q^{(-iz - \frac{1}{2})|x|} \,dz ,
    \end{split}
\end{equation}
where $c_G=\frac{q\log{q}}{4\pi(q+1)}$.

Since
\begin{equation*}
   \left(\boldsymbol{c}(z)\right)^{-1}=\frac{q+1}{\sqrt{q}} \cdot \frac{q^{iz}-q^{-iz}}{q^{\frac{1}{2} + iz} - q^{-\frac{1}{2} - iz}}=
\frac{q+1}{q}  \cdot \frac{1-e^{-2iz\log{q}}}{1-\frac{1}{q}e^{-2iz\log{q}}},
\end{equation*}
using $c_G=\frac{q\log{q}}{4\pi(q+1)}$ and the substitution $u = -z\log{q}$, equation~\eqref{inverse1} becomes
\begin{equation}\label{w_t_1}
W^{a,b}_{q+1}\left(|x|;t\right)
=\frac{1}{2\pi}q^{-\frac{|x|}{2}}\sum_{k=0}^{\lfloor \frac{t}{2} \rfloor} (-1)^k h_{2k}\left(t,0\right)\,d_{a,b}^k
\int_{-\pi}^{\pi} \Big(1- c_{a,b}\cos u\Big)^{k} \frac{e^{iu|x|}-e^{iu(|x|+2)}}{1-\frac{1}{q}e^{2iu}} \,du.
\end{equation}
Let us denote
$$I=\frac{1}{2\pi}\int_{-\pi}^{\pi} \Big(1- c_{a,b}\cos u\Big)^{k} \frac{e^{iu|x|}-e^{iu(|x|+2)}}{1-\frac{1}{q}e^{2iu}} \,du.$$
Since
$$
\frac{1}{1 - \frac{1}{q} e^{2iu}} = \sum_{\ell=0}^{\infty} \left(\frac{1}{q} e^{2iu}\right)^\ell = \sum_{\ell=0}^{\infty} \frac{e^{2i \ell u}}{q^\ell},
$$
from the uniform convergence of the above series, we get
\begin{equation}\label{integ_I}
I=\sum_{\ell=0}^{\infty} q^{-\ell} \frac{1}{2\pi}\int_{-\pi}^{\pi} \Big(1- c_{a,b}\cos u\Big)^{k} \left(e^{iu(|x|+2\ell)} - e^{iu(|x|+2\ell+2)} \right) \,du.
\end{equation}
Applying Proposition~\ref{prop:IJ-int}\,(i) with $\alpha=-c_{a,b}\neq 0$, $n=|x|+2\ell$ and $n=|x|+2\ell+2$, we get
\begin{equation}\label{integ_I2}
I
= \sum_{\ell=0}^{\lfloor\frac{k-|x|}{2}\rfloor} q^{-\ell} \left(I_{|x|+2\ell}^{-c_{a,b}}(k)-I_{|x|+2\ell+2}^{-c_{a,b}}(k)\right).
\end{equation}

Substituting \eqref{integ_I2} into \eqref{w_t_1} yields that
\begin{equation*}
\begin{split}
W_{q+1}^{a,b}(|x|;t)
&= \sum_{k=0}^{\lfloor \frac{t}{2} \rfloor} (-1)^{k} h_{2k}(t,0)\,d_{a,b}^k\,q^{-\frac{|x|}{2}}
\sum_{\ell=0}^{\lfloor \frac{k - |x|}{2} \rfloor} q^{-\ell} \left( I_{|x|+2\ell}^{-c_{a,b}}(k) - I_{|x|+2\ell+2}^{-c_{a,b}}(k) \right)\\
&= \sum_{k=0}^{\lfloor \frac{t}{2} \rfloor} (-1)^{k} h_{2k}(t,0)\,d_{a,b}^k \left( q^{-\frac{|x|}{2}} I_{|x|}^{-c_{a,b}}(k) - (q-1) \sum_{\ell=1}^{\lfloor \frac{k - |x|}{2} \rfloor} q^{-\frac{|x|+2\ell}{2}} I_{|x|+2\ell}^{-c_{a,b}}(k) \right).
\end{split}
\end{equation*}
Indeed, the second equality follows by using the weighted telescoping identity 
\[ \sum_{\ell=0}^{N}q^{-\ell}(A_\ell-A_{\ell+1}) = A_0-(q-1)\sum_{\ell=1}^{N}q^{-\ell}A_\ell-q^{-N}A_{N+1},
\]
with $N=\lfloor (k-|x|)/2\rfloor$ and $A_\ell=I_{|x|+2\ell}^{-c_{a,b}}(k)$. Here the last term vanishes because $A_{N+1}=I_{|x|+2N+2}^{-c_{a,b}}(k)=0$, since $|x|+2N+2>k$.

By setting $r=|x|$ and using the identity $I_n^{-c}(k)=(-1)^nI_n^c(k)$, we get \eqref{wave_kernel_a}.

When $q=1$, the $(q+1)$-regular tree $T_2$ is isomorphic to the line graph $\mathbb Z$. In this case, the convolution kernel of the operator $\Delta^{a,b}_{T_{2}}=b\Delta_{T_{2}}-aI$ is 
\[ g_{a,b}(n):=\big((2b-a)\delta_0 - 2b\nu\big)(n) =\begin{cases} 2b-a, & n=0,\\ -b, & |n|=1,\\ 0, & \text{otherwise}. \end{cases} 
\] 
We use the classical Fourier transform on $\mathbb Z$. Therefore, the Fourier transform of $g_{a,b}$ is 
\begin{equation*}
\begin{split} 
\widehat{g}_{a,b}(z) &= \sum_{n \in \mathbb{Z}} g_{a,b}(n)\, e^{-inz} =(2b-a)-be^{-iz}-be^{iz}\\ &=2b-a-2b\cos z =d_{a,b}\Bigl(1-c_{a,b}\cos z\Bigr). 
\end{split} 
\end{equation*} 
Then, the Fourier transform of the wave kernel is given by 
\begin{equation*}
\widehat{W}^{a,b}_{2,t}(z) = \sum_{k=0}^{\lfloor \frac{t}{2} \rfloor}(-1)^k \, h_{2k}(t,0)\, d_{a,b}^k\bigl(1- c_{a,b}\cos z\bigr)^{k}. \end{equation*} 
Since the sum on the right-hand side is finite, applying the Fourier inversion formula and Proposition~\ref{prop:IJ-int}\,(i) yields
\begin{equation}\label{eq. W2 evaluation}
\begin{split} 
W^{a,b}_{2}(|x|;t) &=\frac{1}{2\pi} \int_{-\pi}^{\pi} \widehat{W}^{a,b}_{2,t}(z)\, e^{i|x|z}\,dz\\ &=\frac{1}{2\pi}\sum_{k=0}^{\lfloor t/2 \rfloor} (-1)^k h_{2k}(t,0)\,d_{a,b}^k \int_{-\pi}^{\pi}\!\bigl(1- c_{a,b}\cos z\bigr)^{k}e^{i|x|z}\,dz\\ &=\sum_{k=0}^{\lfloor t/2 \rfloor} (-1)^k h_{2k}(t,0)\,d_{a,b}^k\, I_{|x|}^{-c_{a,b}}(k),
\end{split} 
\end{equation}
which coincides with \eqref{wave_kernel_a} for $q=1$, since $q-1=0$.
\end{proof}

\subsection{General initial value problem on the tree ${T}_{q+1}$}

We can now prove the following corollary, in which the solution to the general wave equation on the tree is expressed in terms of the wave kernels $W_{q+1}^{a,b}$ and $V_{q+1}^{a,b}$.

\begin{corollary}
  With the notation as above, the solution $w_{q+1}^{a,b}$ of the wave equation \eqref{eq. wave on tree} satisfying the initial condition \eqref{initial cond general}, at time $t\in\mathbb N_0$, is given by
$$
w_{q+1}^{a,b}(x;t)=\sum_{r=0}^{\lfloor\frac{t}{2}\rfloor}\sum_{y\in T_{q+1}:\, d(x,y)=r}\left(W_{q+1}^{a,b}(r;t)w(y) + V_{q+1}^{a,b}(r;t)v(y)\right)
$$
for $x\in T_{q+1}$.
\end{corollary}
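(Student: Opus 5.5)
The natural candidate is
\[
w(x;t):=(W_t * w)(x)+(V_t * v)(x),
\]
where $W_t$ and $V_t$ denote the radial functions $y\mapsto W_{q+1}^{a,b}(d(x,y);t)$ and $y\mapsto V_{q+1}^{a,b}(d(x,y);t)$, and $*$ is the radial convolution \eqref{eq. def. conv}. Grouping the vertices $y$ by the distance $r=d(x,y)$ turns this convolution into exactly the double sum of the statement. The plan has four steps: check the range of $r$, check that the candidate solves the equation, check the initial data, and prove uniqueness.

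\textbf{Range of $r$.} The finite propagation property noted after Theorem \ref{Tqkernel_a} gives $W_{q+1}^{a,b}(r;t)=0$ for $r>\lfloor t/2\rfloor$ and $V_{q+1}^{a,b}(r;t)=0$ for $r>\lfloor (t-1)/2\rfloor$. It follows from the $I_n^c(k)=0$ for $n>k$ property, since $k\le t/2$. Hence the sum over $r$ may be truncated at $\lfloor t/2\rfloor$. Each sphere around $x$ is finite, so all sums are finite and no $L^2$ convergence issue arises.

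\textbf{The candidate solves the equation.} By Lemma \ref{lem: kernels on tree}, $W_t$ equals a finite combination of the convolution powers $(B\delta_0-b\nu)^{(*k)}$, and likewise for $V_t$. Convolution of radial kernels on $T_{q+1}$ is associative, and $\Delta_{q+1}^{a,b}$ acts as convolution with the radial kernel $g_{a,b}$. Therefore
\[
\Delta_{q+1}^{a,b}(W_t*w)=(\Delta_{q+1}^{a,b}W_t)*w .
\]
The difference operator $\partial_t$ acts only on the coefficients $h_{2k}(t,0)$ and $h_{2k+1}(t,0)$, so it commutes with convolution against a fixed $w$ or $v$. Equation \eqref{wave_eq_disc} for $W_t$ and $V_t$ then transfers directly to the candidate $w$.

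\textbf{Initial data.} Conditions \eqref{cond1} and \eqref{cond on V} give:
\begin{itemize}
\item at $t=0$, $W_0=\delta_0$ and $V_0=0$;
\item $\partial_tW_t|_{t=0}=0$ and $\partial_tV_t|_{t=0}=\delta_0$.
\end{itemize}
Hence $w(x;0)=w(x)$ and $\partial_tw(x;0)=v(x)$, which is \eqref{initial cond general}.

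\textbf{Uniqueness.} Expanding $\partial_t^2$ rewrites the equation as the recursion
\[
W(\cdot;t+2)=2W(\cdot;t+1)-W(\cdot;t)-\Delta_{q+1}^{a,b}W(\cdot;t).
\]
Thus $w(\cdot;0)$ and $w(\cdot;1)=w+v$ determine every later time step by step. This makes the solution unique, so it must coincide with the convolution formula above.

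\textbf{Main obstacle.} I expect the delicate point to be justifying $\Delta^{a,b}_{q+1}(W_t*w)=(\Delta^{a,b}_{q+1}W_t)*w$ at the level of the actual kernel $W_t$, not just the operator $\mathcal W_t^{a,b}$. The route I would take first is to observe that $W_t*w=\mathcal W_t^{a,b}w$ by the definition of $\mathcal W_t^{a,b}$ as a polynomial in $\Delta_{q+1}^{a,b}$, and that $\mathcal W_t^{a,b}$ commutes with $\Delta_{q+1}^{a,b}$ trivially. This bypasses any general associativity statement.
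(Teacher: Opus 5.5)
Your proposal is correct and follows the same route as the paper. The solution is $(W_{q+1}^{a,b}\ast w)+(V_{q+1}^{a,b}\ast v)$, and the sum over $r$ truncates at $\lfloor t/2\rfloor$ because both kernels vanish beyond that radius. You go further than the paper by actually verifying the equation and the initial data, and by proving uniqueness through the forward recursion $W(\cdot;t+2)=2W(\cdot;t+1)-W(\cdot;t)-\Delta_{q+1}^{a,b}W(\cdot;t)$. The step you flag as delicate is in fact routine: by automorphism invariance, $x\mapsto W_{q+1}^{a,b}(d(x,y);t)$ is the kernel rooted at $y$, and a finite superposition of such solutions is again a solution.
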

\begin{proof}
The functions $W_{q+1}^{a,b}$ and $V_{q+1}^{a,b}$ are convolution kernels for the wave equation \eqref{eq. wave on tree} with initial conditions \eqref{cond1} and \eqref{cond on V}, respectively. Hence, the solution to \eqref{eq. wave on tree} with the initial condition \eqref{initial cond general} is given by
$$
w_{q+1}^{a,b}(x;t)=(W_{q+1}^{a,b}\ast w )(x) +(V_{q+1}^{a,b}\ast v )(x). 
$$
The functions $W_{q+1}^{a,b}$ and $V_{q+1}^{a,b}$ are radial and such that $W_{q+1}^{a,b}(r;t)=V_{q+1}^{a,b}(r;t)=0$ for $r>\lfloor\frac{t}{2}\rfloor$. Therefore, by the definition \eqref{eq. def. conv} of convolution, the stated formula follows immediately. 
\end{proof}

\section{Boundary case and applications}\label{sec. applic on tree}

In this section, we study the special case of the (shifted) combinatorial Laplacian, meaning that we assume $b>0$ and take the maximal value $a=b(q+1-2\sqrt{q})$. We refer to this setting as the boundary setting. In the proposition \ref{prop:boundary} below, we deduce the expression for the wave kernels in terms of the $J$-Bessel functions.
Then, we apply the uniqueness of the wave kernel on the $2$-tree $\mathbb{Z}=T_2$ to derive an identity relating discrete $I$-Bessel and $J$-Bessel functions.

\begin{proposition}\label{prop:boundary}
The wave kernels associated with the shifted combinatorial Laplacian $b\Delta_{q+1}-b(q+1-2\sqrt q)I$, $b>0$, satisfying the initial conditions \eqref{cond1} and \eqref{cond on V}, for $x\in T_{q+1}$ with $|x|=r$, are given by
\begin{equation}\label{eq:J-form}
W^{b(q+1-2\sqrt{q}),b}_{q+1}(r;t)
=
q^{-\tfrac{r}{2}}\,J_{2r}^{\,c_J(b)}(t)
-(q-1)\sum_{\ell=1}^{\lfloor \tfrac{t-2r}{4}\rfloor}q^{-\tfrac{r+2\ell}{2}}\,J_{2r+4\ell}^{\,c_J(b)}(t),
\end{equation}
and
\begin{equation}\label{eq:J-form2}
V^{b(q+1-2\sqrt{q}),b}_{q+1}(r;t)
= \sum_{m=0}^{t-1}\left[
q^{-\tfrac{r}{2}}\,J_{2r}^{\,c_J(b)}(m)
-(q-1)\sum_{\ell=1}^{\lfloor \tfrac{t-2r}{4}\rfloor}q^{-\tfrac{r+2\ell}{2}}\,J_{2r+4\ell}^{\,c_J(b)}(m)\right],
\end{equation}
for $t\geq 1$,\footnote{Note that $V^{b(q+1-2\sqrt{q}),b}_{q+1}(r;0)=0$.} where $J_{n}^{c_J(b)}(t)$ denotes the discrete $J$-Bessel function and $c_J(b)=2\,(qb^2)^{1/4}$.
\end{proposition}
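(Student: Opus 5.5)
The plan is to rerun the Fourier-inversion argument from the proof of Theorem~\ref{Tqkernel_a}, but without expanding the time factor into powers. In the boundary case we have $d_{a,b}=2b\sqrt q$ and $c_{a,b}=1$. So, by \eqref{conv_kernel_transform} and the substitution $u=-z\log q$, the symbol of the generalized Laplacian becomes
$$2b\sqrt q\,(1-\cos u)=4b\sqrt q\,\sin^2(u/2)=\bigl(c_J(b)\sin(u/2)\bigr)^2,$$
where $c_J(b)=2(qb^2)^{1/4}$ and $b>0$ is used to take the real square root. Consequently the transformed wave kernel collapses. Pairing the binomial terms with the even-power identity recalled in Section~\ref{sec. wave on tree} (with $z=i\,c_J\sin(u/2)$) gives
$$\sum_{k}(-1)^k\binom{t}{2k}\bigl(c_J\sin(u/2)\bigr)^{2k}=\tfrac12\Bigl[\bigl(1-ic_J\sin(u/2)\bigr)^t+\bigl(1+ic_J\sin(u/2)\bigr)^t\Bigr].$$

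The key step is a half-angle version of Proposition~\ref{prop:IJ-int}(ii). For $n\in\mathbb N_0$ I will show
$$\frac{1}{2\pi}\int_{-\pi}^{\pi}\tfrac12\Bigl[\bigl(1-ic_J\sin\tfrac u2\bigr)^t+\bigl(1+ic_J\sin\tfrac u2\bigr)^t\Bigr]e^{inu}\,du=J^{c_J}_{2n}(t).$$
The argument runs as follows.
\begin{itemize}
\item Substitute $v=u/2$. The integral becomes $\frac{1}{2\pi}\int_{-\pi/2}^{\pi/2}\bigl[(1-ic_J\sin v)^t+(1+ic_J\sin v)^t\bigr]e^{2inv}\,dv$.
\item In the second summand, replace $v$ by $v\pm\pi$. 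This flips the sign of $\sin v$ and leaves $e^{2inv}$ unchanged, so that summand becomes the integral of $(1-ic_J\sin v)^te^{2inv}$ over the complementary half-period $[-\pi,-\pi/2]\cup[\pi/2,\pi]$.
\item The two pieces then combine to $\frac1{2\pi}\int_{-\pi}^{\pi}(1-ic_J\sin v)^te^{2inv}\,dv$, which equals $J^{c_J}_{2n}(t)$ by Proposition~\ref{prop:IJ-int}(ii), and is $0$ if $2n>t$.
\end{itemize}
This step is the only genuinely new ingredient, and I expect it to be the main point requiring care: matching the half-angle periodicity and the sign flip so that no factor of $2$ is lost.

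With this identity in hand, I insert the closed form into the inversion formula \eqref{w_t_1} in place of the binomial sum. I expand $(1-\frac1q e^{2iu})^{-1}$ geometrically exactly as in \eqref{integ_I}, apply the identity with $n=r+2\ell$ and $n=r+2\ell+2$, and obtain
$$W=q^{-r/2}\sum_{\ell\ge0}q^{-\ell}\bigl(J^{c_J}_{2r+4\ell}(t)-J^{c_J}_{2r+4\ell+4}(t)\bigr).$$
The terms vanish once $2r+4\ell>t$, so the sum is finite. The weighted telescoping identity used in the proof of Theorem~\ref{Tqkernel_a}, with $N=\lfloor(t-2r)/4\rfloor$ and boundary term $J^{c_J}_{2r+4N+4}(t)=0$, then gives \eqref{eq:J-form}. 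For $q=1$ the same computation goes through with the classical Fourier transform on $\mathbb Z$, as in \eqref{eq. W2 evaluation}: the symbol is $(c_J\sin(z/2))^2$, and the half-angle identity directly yields $W=J^{c_J}_{2r}(t)$, consistent with $q-1=0$.

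Finally, \eqref{eq:J-form2} follows from \eqref{eq. V as int of W} by summing \eqref{eq:J-form} over $m=0,\dots,t-1$. In that sum the inner upper limit $\lfloor (m-2r)/4\rfloor$ may be replaced by $\lfloor (t-2r)/4\rfloor$, because the additional terms have index $2r+4\ell>m$ and therefore $J^{c_J}_{2r+4\ell}(m)=0$.
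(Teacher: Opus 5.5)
Your proof is correct, but it reaches \eqref{eq:J-form} by a different route than the paper.

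In its proof, the paper never resums in $t$ inside the integral. For each fixed $k$ it writes $(1-\cos u)^k=2^k\sin^{2k}(u/2)$ and evaluates the Fourier coefficients as $(-1)^n2^{-k}\binom{2k}{k-n}$ using the table integral \cite{GR}, 3.631.9. This gives the binomial double sum \eqref{eq. W precomp}. It then proves the combinatorial identity \eqref{eq. identity for J bess} by two reindexings ($k=j+r$, then $j=m+2\ell$), which lets it read off $J^{c_J(b)}_{2(r+2\ell)}(t)$ from the definition \eqref{eq:def-J-Bessel}.

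You instead collapse the $k$-sum first into the discrete cosine $\tfrac12\bigl[(1-ic_J\sin\tfrac u2)^t+(1+ic_J\sin\tfrac u2)^t\bigr]$. You then prove a half-angle version of Proposition~\ref{prop:IJ-int}\,(ii). Your shift by $\pm\pi$, which flips $\sin v$ and fixes $e^{2inv}$, is right, and no factor of $2$ is lost.

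What your route buys:
\begin{itemize}
\item It avoids both the external integral and the double-sum manipulation.
\item It explains structurally why only even-index $J$'s occur: the symbol is the perfect square $(c_J\sin(u/2))^2$.
\item It actually uses part (ii) of Proposition~\ref{prop:IJ-int}, which the paper proves but never applies.
\item It gives more than the statement. Expanding the same integral term by term with part (i) ($\alpha=-1$) yields $J^{c_J}_{2n}(t)=\sum_k(-1)^k\binom{t}{2k}(c_J^2/2)^kI^{-1}_n(k)$. With $c_J=2c$ and $I_n^{-1}(k)=(-1)^nI_n^1(k)$, this is exactly \eqref{discrete_analog}. So Corollary~\ref{cor: I and J rel} follows directly, without the uniqueness theorem of \cite{Slav17}.
\end{itemize}

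The paper's route, in exchange, produces the explicit binomial form \eqref{eq. W precomp}. It also stays closer to the term-by-term structure of Theorem~\ref{Tqkernel_a}, as the remark using \eqref{eq. indentity I1} makes explicit.

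Finally, your justification for using the upper limit $\lfloor (t-2r)/4\rfloor$ in \eqref{eq:J-form2} for every $m$ is a point the paper leaves implicit.
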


\begin{proof} Assume first that $q\geq 2$. 
When $a=b(q+1-2\sqrt{q})$, we have $c_{a,b}=1$ and $d_{a,b}=2b\sqrt{q}$. The starting point for our proof is the computation of the integral \eqref{integ_I} appearing in formula \eqref{w_t_1}. We have that in this case
$$
I= \sum_{\ell=0}^{\infty} q^{-\ell} \frac{1}{2\pi}2^{k} \int_{-\pi}^{\pi}\sin^{2k}(u/2)  \left(e^{iu(r+2\ell)} - e^{iu(r+2\ell+2)} \right) \,du.
$$
For any integer $n\geq 0$, after the change of variables $u=\pi-2x$, we obtain
\begin{equation}\label{eq. sin int}
\frac{1}{2\pi}2^{k} \int_{-\pi}^{\pi}\sin^{2k}(u/2) e^{iun} \,du=(-1)^n \frac{2^{k+1}}{\pi}\int_{0}^{\pi/2}\cos^{2k}(x)\cos(2nx)\, dx.
\end{equation}
Applying \cite{GR}, formula 3.631.9 with $a=2n$ and $\nu=2k+1$, yields
\begin{equation}\label{eq. sin int eval}
\frac{2^{k+1}}{\pi}\int_{0}^{\pi/2}\cos^{2k}(x)\cos(2nx)\,dx
=
2^{-k}\left\{
\begin{array}{ll}
0, & \text{if } n\geq k+1,\\[4pt]
\binom{2k}{k-n}, & \text{if } 0\leq n\leq k.
\end{array}
\right.
\end{equation}
Thus, for $k\geq r$ we have
$$
I=2^{-k}\left[(-1)^r\binom{2k}{k-r}-(q-1)\sum_{\ell=1}^{\lfloor\frac{k-r}{2}\rfloor}q^{-\ell}(-1)^r
\binom{2k}{k-(r+2\ell)}\right],
$$
while $I=0$ for $k<r$.
Substituting this expression into formula \eqref{w_t_1} yields
\begin{multline}\label{eq. W precomp}
W_{q+1}^{b(q+1-2\sqrt{q}),b}(r;t)
=
\sum_{k=r}^{\left\lfloor \frac{t}{2}\right\rfloor}
(-1)^{k+r}\,
q^{-\frac{r}{2}+\frac{k}{2}}b^k\,
\binom{t}{2k}
\left(
\binom{2k}{k-r}
-
(q-1)\sum_{\ell=1}^{\left\lfloor \frac{k-r}{2}\right\rfloor}
q^{-\ell}\,
\binom{2k}{k-(r+2\ell)}
\right),
\end{multline}
where the empty sum is assumed to be zero.

Now, we prove the identity
\begin{equation}\label{eq. identity for J bess}
\sum_{k=r}^{\lfloor\frac{t}{2}\rfloor}(-1)^{k+r} q^{-\frac{r}{2}+\frac{k}{2}}b^k\binom{t}{2k}\sum_{\ell=0}^{\lfloor\frac{k-r}{2}\rfloor}q^{-\ell}\binom{2k}{k-(r+2\ell)} =\sum_{\ell=0}^{\lfloor\frac{t-2r}{4}\rfloor}q^{-\frac{r+2\ell}{2}} J_{2(r+2\ell)}^{c_J(b)}(t).
\end{equation}
We start with the change of variable $k=j+r$ and then interchange sums over $\ell$ and $j$ to obtain that the left-hand side of \eqref{eq. identity for J bess} equals
$$
\sum_{\ell=0}^{\lfloor\frac{t-2r}{4}\rfloor}\sum_{j=2\ell}^{\lfloor\frac{t-2r}{2}\rfloor}(-1)^jq^{\tfrac{j}{2}-\ell}b^{j+r}
\frac{t!}{(t-2j-2r)!(j-2\ell)!(j+2r+2\ell)!}.
$$
Finally, by the change of variables $j=m+2\ell$ in the second sum above, we see that the left-hand side of \eqref{eq. identity for J bess} equals
$$
\sum_{\ell=0}^{\lfloor\frac{t-2r}{4}\rfloor}\sum_{m=0}^{\lfloor\frac{t-2r-4\ell}{2}\rfloor}(-1)^m 
\frac{q^{\tfrac{m}{2}}b^{m+2\ell+r} \cdot t!}{m!(t-2m-(2r+4\ell))!(m+2r+4\ell)!}=\sum_{\ell=0}^{\lfloor\frac{t-2r}{4}\rfloor}q^{-\frac{r+2\ell}{2}} J_{2(r+2\ell)}^{c_J(b)}(t),
$$
as claimed (the second equality follows from the definition \eqref{eq:def-J-Bessel} of the discrete J-Bessel function).
The identity \eqref{eq. identity for J bess} combined with \eqref{eq. W precomp} yields the statement \eqref{eq:J-form}. 

When $q=1$, equation \eqref{eq. W2 evaluation} with $c_{a,b}=1$, $d_{a,b}=2b$, combined with  \eqref{eq. sin int} and \eqref{eq. sin int eval}, gives
$$
W_2^{0,b}=\sum_{k=r}^{\lfloor\frac{t}{2}\rfloor}(-1)^{k+r}b^k \binom{t}{2k} \binom{2k}{k-r}=\sum_{\ell=0}^{\lfloor\frac{t-2r}{2}\rfloor}(-1)^{\ell} \frac{(\sqrt{b})^{2\ell+2r} \cdot t!}{\ell!(t-2\ell-2r)!(\ell+2r)!}=J_{2r}^{2\sqrt{b}}(t),
$$
where we used the substitution $\ell=k-r$. This proves \eqref{eq:J-form} for $q=1$.

Finally,  \eqref{eq:J-form2} follows from \eqref{eq:J-form} and  \eqref{eq. V as int of W}.
\end{proof}

\begin{remark}
Proposition \ref{prop:boundary} can be proved differently, namely by starting from Theorem~\ref{Tqkernel_a} with $a=b(q+1-2\sqrt{q})$, $b>0$. The key observation is that, for $0\leq n\leq k$,
\begin{equation}\label{eq. indentity I1}
I_n^{\,1}(k)=2^{-k}\binom{2k}{\,k-n\,}.
\end{equation}
Indeed, starting from \eqref{eq:def-I-Bessel} and using the multinomial expansion, it follows that $I_n^c(k)$ is the coefficient of $z^n$ in 
\[ \left(1+\frac{c}{2}z+\frac{c}{2}z^{-1}\right)^k. \] 
For $c=1$, this becomes $2^{-k}z^{-k}(1+z)^{2k}$.
Hence the coefficient of $z^n$ is equal to the coefficient of
$z^{k+n}$ in $2^{-k}(1+z)^{2k}$. Therefore,
\[
I_n^{\,1}(k)
=
2^{-k}\binom{2k}{k+n}.
\]
Using the symmetry of binomial coefficients $\binom{2k}{k+n}=\binom{2k}{k-n}$, we obtain \eqref{eq. indentity I1}. The rest of the proof then proceeds as above.\footnote{We are thankful to the anonymous referee for suggesting the proof of \eqref{eq. indentity I1}}
\end{remark}

\begin{remark}
When $q=1$, the tree $T_2$ can be identified with the integer graph $\mathbb{Z}$. Fundamental solutions to the discrete wave equation \eqref{wave_eq_disc} with $b=c^2$, $c>0$, and $a=0$ on $\mathbb{Z}$ were derived in \cite[Theorem 3.1]{Slav18}, where it was shown that $W_{T_2}^{0,c^2}(r;t) = J_{2r}^{2c}(t)$, a result which coincides with \eqref{eq:J-form} for $q=1$.
\end{remark}

Using the uniqueness of the solution to the wave equation on $T_2=\mathbb{Z}$, see \cite[Theorem 2.3]{Slav17}, we obtain the following corollary.

\begin{corollary} \label{cor: I and J rel}
 With the notation as above, for any $c>0$ and any $n,t\in\mathbb N_0$, the identity \eqref{discrete_analog} holds true.
\end{corollary}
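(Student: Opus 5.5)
We work on $T_2=\mathbb Z$ (i.e. $q=1$) with $a=0$ and $b=c^2$, $c>0$, and compute the same wave kernel $W_2^{0,c^2}(n;t)$ in two ways. Uniqueness of the solution on $\mathbb Z$ (\cite[Theorem 2.3]{Slav17}) then forces the two expressions to agree. The parameters satisfy \eqref{eq. main ass a,b}: $b(q+1)-a=2c^2=2|b|\sqrt q$, so this is exactly the boundary case. There $c_{0,c^2}=\frac{2c^2}{2c^2}=1$ and $d_{0,c^2}=2c^2$.

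\emph{First expression.} Apply Theorem~\ref{Tqkernel_a} with $q=1$. The factor $q-1$ kills the inner sum over $\ell$, so for $r=n$,
\[
W_2^{0,c^2}(n;t)=\sum_{k=0}^{\lfloor t/2\rfloor}(-1)^{k+n}\binom{t}{2k}(2c^2)^k\, I_n^{1}(k).
\]
This is already the right-hand side of \eqref{discrete_analog}. It is the same as \eqref{eq. W2 evaluation} after using $I_n^{-1}(k)=(-1)^nI_n^1(k)$.

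\emph{Second expression.} Apply Proposition~\ref{prop:boundary} with $q=1$ and $b=c^2$. Again the $(q-1)$-term vanishes. Since $q^{-r/2}=1$ and $c_J(c^2)=2(c^4)^{1/4}=2c$, we get $W_2^{0,c^2}(n;t)=J_{2n}^{2c}(t)$. This also agrees with the case $q=1$ treated explicitly at the end of that proof, and with \cite[Theorem 3.1]{Slav18}.

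\emph{Conclusion.} Both formulas solve \eqref{wave_eq_disc} on $\mathbb Z$ with initial data \eqref{cond1}. Each formula is derived by Fourier inversion from the convolution series of Lemma~\ref{lem: kernels on tree}, so in fact both equal the same object. Invoking uniqueness \cite[Theorem 2.3]{Slav17} makes this rigorous regardless of derivation, so the two expressions coincide and \eqref{discrete_analog} follows.

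The case $n>\lfloor t/2\rfloor$ needs a word. The left-hand side $J^{2c}_{2n}(t)$ vanishes since $2n>t$. On the right-hand side, every $k\le\lfloor t/2\rfloor$ satisfies $k<n$, so $I_n^1(k)=0$ and the right-hand side vanishes too. Hence the identity holds for all $n,t\in\mathbb N_0$.

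The only delicate point is bookkeeping: checking that the normalizations match, namely $c_J(b)=2c$, the sign $(-1)^{k+n}$ coming from $I^{-1}_n=(-1)^nI^1_n$, and $d=2c^2$. A direct combinatorial cross-check is also available. By \eqref{eq. indentity I1}, $I_n^1(k)=2^{-k}\binom{2k}{k-n}$, so the right-hand side becomes $\sum_k(-1)^{k+n}c^{2k}\binom{t}{2k}\binom{2k}{k-n}$. Substituting $k=n+\ell$ turns this into the defining series \eqref{eq:def-J-Bessel} of $J^{2c}_{2n}(t)$, exactly as in the $q=1$ computation in the proof of Proposition~\ref{prop:boundary}. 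This gives the identity even without appealing to uniqueness.
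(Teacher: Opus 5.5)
Your argument is correct and matches the paper's. Both identify the right-hand side of \eqref{discrete_analog} as $W_2^{0,c^2}(n;t)$ via Theorem~\ref{Tqkernel_a} together with $I_n^{-c}=(-1)^nI_n^c$, and the left-hand side as the same kernel, then conclude by uniqueness; the paper cites \cite[Theorem 3.1]{Slav18} for the left-hand side, while you use the $q=1$ case of Proposition~\ref{prop:boundary}, which reproduces it. Your closing check, using \eqref{eq. indentity I1} and the substitution $k=n+\ell$, is also valid and gives a self-contained proof that needs neither uniqueness nor \cite{Slav18}.
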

\begin{proof}
According to  \eqref{wave_kernel_a}, the right-hand side of \eqref{discrete_analog} is the wave kernel associated with the generalized Laplacian $\Delta_{T_2}^{0,c^2}$ satisfying the initial condition \eqref{cond1}. From \cite[Theorem 3.1]{Slav18} we have that the left-hand side of \eqref{discrete_analog} is the wave kernel associated with the generalized Laplacian $\Delta_{T_2}^{0,c^2}$ with the initial condition \eqref{cond1}. The result follows from the uniqueness of the wave kernel and the identity $I_n^{-c}(k)=(-1)^n I_n^c(k)$.
\end{proof}

\begin{remark}
Formula \eqref{discrete_analog} can be viewed as a discrete analogue of the following identity for classical Bessel functions given in \cite[Eq.~9.6.52]{AS}
\begin{equation*}
       \mathcal{J}_{\nu}(z) = \sum_{k=0}^{\infty} \frac{(-1)^k z^k}{k!} \mathcal{I}_{\nu+k}(z).
\end{equation*}

In the discrete setting, the function $J_{2n}^{2c}(t)$ corresponds to the fundamental solution, while the coefficients $(2c^2)^kI_n^{1}(k)$ play the role analogous to that of $I_{\nu+k}(z)$ in the continuous case. The discrete monomials $ h_{2k}(t,0)$ serve as counterparts to the continuous monomials $\dfrac{z^k}{k!}$, capturing the propagation structure in discrete time. 

This analogy highlights the structural similarity between continuous and discrete convolution-type identities involving Bessel-type functions.
\end{remark}

\section{Asymptotic behavior of the wave kernels}\label{sec. asymptotics}

In this section, we study the asymptotic behavior of the wave kernel $W_{q+1}^{a,b}(r;t)$ as a function of the radial distance $r$, as $r\to\infty$ (for fixed time $t$) and as a function of time $t$, as $t\to\infty$ (for fixed radial distance $r$). We also investigate the asymptotic behavior of the boundary case wave kernel when $q\to\infty$.

\subsection{Asymptotic behavior for large radial distance}

When the time variable is fixed, it is rather straightforward to deduce the following corollary.

\begin{corollary}
    For a fixed time $t\in\mathbb{N}_0$, we have
    $$ \lim_{r\to\infty}W_{q+1}^{a,b}(r;t)=0\quad\,\, \text{and}\quad\,\,  \lim_{r\to\infty}V_{q+1}^{a,b}(r;t)=0.
    $$
\end{corollary}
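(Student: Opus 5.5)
The plan is to use finite propagation. For fixed $t$, the limits are attained in the strong sense that both kernels vanish identically once $r$ is large. I would work directly from the explicit formulas \eqref{wave_kernel_a} and \eqref{wave_kernel_b} of Theorem~\ref{Tqkernel_a}, together with the observation (recorded after the theorem, and immediate from \eqref{eq:def-I-Bessel}) that $I_n^{c}(k)=0$ whenever $n>k$.

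For $W_{q+1}^{a,b}$, fix $t$ and take $r>\lfloor t/2\rfloor$. In \eqref{wave_kernel_a} every summation index satisfies $k\le\lfloor t/2\rfloor<r$. Hence $I_r^{c_{a,b}}(k)=0$, and every $I_{r+2\ell}^{c_{a,b}}(k)$ with $\ell\ge1$ vanishes as well, since $r+2\ell>k$. In fact the inner sum over $\ell$ is already empty, because $\lfloor (k-r)/2\rfloor<1$. So every term vanishes and $W_{q+1}^{a,b}(r;t)=0$ for all $r>\lfloor t/2\rfloor$, which gives the first limit.

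The same argument applied to \eqref{wave_kernel_b}, where now $k\le\lfloor (t-1)/2\rfloor$, shows that $V_{q+1}^{a,b}(r;t)=0$ for $r>\lfloor (t-1)/2\rfloor$ when $t\ge1$. For $t=0$ the initial condition \eqref{cond on V} gives $V_{q+1}^{a,b}(r;0)=0$ directly. Alternatively, one can use \eqref{eq. V as int of W}: $V$ is a finite sum of $W(r;m)$ with $m\le t-1$, and each of these vanishes for $r>\lfloor t/2\rfloor$.

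An independent check, which avoids the Bessel formulas, comes from Lemma~\ref{lem: kernels on tree}. The $k$-fold convolution $\left(B\delta_0-b\nu\right)^{(*k)}$ is supported in the ball of radius $k$, since each convolution with $\nu$ moves support by one step. Only $k\le t/2$ contributes, so the support is contained in $\{|x|\le t/2\}$.

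There is no real obstacle here. The only point needing care is the bookkeeping of the summation ranges, in particular confirming the vanishing of $I_n^c(k)$ for $n>k$, which follows from the empty range $\lfloor (k-n)/2\rfloor<0$ in \eqref{eq:def-I-Bessel}.
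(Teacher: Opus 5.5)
Your proof is correct and follows the paper's argument. The paper, like you, reads off from the explicit formulas \eqref{wave_kernel_a} and \eqref{wave_kernel_b} that $I_n^{c}(k)=0$ for $n>k$, so for fixed $t$ the kernels vanish identically once $r>\lfloor t/2\rfloor$ (respectively $r>\lfloor (t-1)/2\rfloor$), and the limits are trivially zero. Your extra support check via Lemma~\ref{lem: kernels on tree} is not needed, but it also covers the degenerate case $d_{a,b}=0$, where the Bessel formula is not available.
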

\begin{proof}
The statement follows from explicit expressions \eqref{wave_kernel_a} and \eqref{wave_kernel_b},  combined with the fact that $I_r^c(k)=0$ for all $k<r$.
\end{proof}

\subsection{Large time asymptotic behavior: boundary case $a=b(q+1-2\sqrt{q})$}

When time tends to infinity, the wave kernel is expected to oscillate. For $q=1$, it is proved in \cite{BSS24} that the wave kernel $W_2^{0,c^2}(r;t)$ oscillates as $t\to\infty$; however, the amplitude of these oscillations exhibits exponential growth. This is somewhat unexpected; yet, as seen in \cite{Ch22}, in some situations solutions to a discrete semilinear wave equation can blow up in finite time.

When $q\geq 2$, we describe the asymptotic behavior of the wave kernel $W_{q+1}^{b(q+1-2\sqrt{q}),b}(r;t)$ (with $b>0$) as $t\to\infty$, using results from \cite{BSS24}. We start with the following lemma.

\begin{lemma}
    For any $n,t\in\mathbb{N}_0$ and $c\in(0,\infty)$, we have, as $t\to\infty$,
\begin{multline}\label{eq. J difference}
J_{2n}^c(t) - J_{2n+4}^c(t)=\frac{(-1)^n}{\sqrt{\pi t\sin\theta}}\left( 1+c^2
\right) ^{\frac{t}{2}}\cos \left( \left( t+\frac{1}{2}\right)
\theta -\frac{\pi}{4} \right)\frac{(c/2)_{2n}t^{2n}}{(c/2)^{2n}(t+1)_{2n}}\times\\
\times \left[1-\frac{t^4(2n-c/2)(2n+1-c/2)(2n+2-c/2)(2n+3-c/2) }{(t+2n+1)(t+2n+2)(t+2n+3)(t+2n+4) (c/2)^4} + O\left(\frac{1}{t^{3/2}}\right)\right],
\end{multline}
where $\theta$ is defined by $\cos\theta=(1+c^2)^{-1/2}$, and $(a)_m:=a(a+1)\ldots (a+m-1)$ ($m\in\mathbb{N}_0$) is the Pochhammer symbol.
\end{lemma}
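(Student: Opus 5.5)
The plan is to derive \eqref{eq. J difference} directly from the large-time asymptotic formula for a single discrete $J$-Bessel function of fixed order that was established in \cite{BSS24}. For fixed $m\in\mathbb N_0$ and $c>0$, I expect that result to take the form
\[
J_{m}^{c}(t)=\frac{(-1)^{m/2}}{\sqrt{\pi t\sin\theta}}\left(1+c^2\right)^{\frac t2}\, P_m(t)\left[\cos\left(\left(t+\tfrac12\right)\theta-\tfrac{\pi}{4}\right)+O\left(t^{-3/2}\right)\right],
\]
for even $m$, with $\cos\theta=(1+c^2)^{-1/2}$. Here $P_m(t)$ is an explicit rational prefactor built from Pochhammer symbols. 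For $m=2n$ it should be $P_{2n}(t)=\frac{(c/2)_{2n}t^{2n}}{(c/2)^{2n}(t+1)_{2n}}$, which is the factor displayed in front of the bracket in \eqref{eq. J difference}. The first step is to restate that result in precisely this normalized form and check that it applies for every fixed even order. In particular, the phase $(t+\frac12)\theta-\frac\pi4$ and the amplitude $(1+c^2)^{t/2}(\pi t\sin\theta)^{-1/2}$ must not depend on the order $m$.

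Next I apply this formula with $m=2n$ and with $m=2n+4$. Since $(-1)^{n+2}=(-1)^n$, both terms share the factor
\[
\frac{(-1)^n}{\sqrt{\pi t\sin\theta}}\left(1+c^2\right)^{t/2}\cos\left(\left(t+\tfrac12\right)\theta-\tfrac\pi4\right).
\]
Subtracting and factoring out this common factor together with $P_{2n}(t)$ leaves the bracket
\[
1-\frac{P_{2n+4}(t)}{P_{2n}(t)}.
\]
This ratio is a purely algebraic computation with Pochhammer symbols. The identity $(t+1)_{2n+4}=(t+1)_{2n}(t+2n+1)(t+2n+2)(t+2n+3)(t+2n+4)$ produces the denominator $(t+2n+1)\cdots(t+2n+4)(c/2)^4$. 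The extra factor $t^4$ comes from $t^{2n+4}/t^{2n}$. The ratio of the $c$-dependent Pochhammer symbols $(\cdot)_{2n+4}/(\cdot)_{2n}$ yields the four-term product in the numerator. I will carry out this computation carefully to match the exact form $(2n-c/2)\cdots(2n+3-c/2)$ written in the statement, including signs, since this depends on the precise convention used in \cite{BSS24}.

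Finally, I handle the error terms. Because $P_{2n+4}(t)/P_{2n}(t)=O(1)$ as $t\to\infty$ for fixed $n$, the two $O(t^{-3/2})$ remainders combine into a single $O(t^{-3/2})$ inside the bracket. The main obstacle lies in the form of these remainders. If \cite{BSS24} gives them as additive corrections to the cosine, as above, rather than as relative corrections, then writing the error as a term inside a bracket multiplied by $\cos(\cdot)$ is only formal, because $\cos(\cdot)$ can be arbitrarily small. I therefore plan to interpret the $O(t^{-3/2})$ in \eqref{eq. J difference} in the same sense as in \cite{BSS24}, namely relative to the amplitude $(1+c^2)^{t/2}(\pi t\sin\theta)^{-1/2}P_{2n}(t)$. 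I will check that this interpretation is the one needed later, in the large-time asymptotics for $W_{q+1}^{b(q+1-2\sqrt q),b}$ via \eqref{eq:J-form}.
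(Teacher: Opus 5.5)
Your plan follows the same route as the paper's proof: the paper takes a Legendre-function representation of $J_{2m}^c(t)$ quoted from BSS24, applies the large-degree asymptotics of Gradshteyn--Ryzhik 8.721 with $m=n$ and $m=n+2$, and subtracts. The step that is supposed to produce the bracket fails, and in fact the displayed formula cannot be proved. The single-order asymptotics you expect, with the $c$-dependent prefactor $\frac{(c/2)_{2m}t^{2m}}{(c/2)^{2m}(t+1)_{2m}}$, is not correct for the functions defined in \eqref{eq:def-J-Bessel}. Write $1-ic\sin u=\sqrt{1+c^2}\,(\cos\theta-i\sin\theta\sin u)$ in Proposition~\ref{prop:IJ-int}\,(ii) and use Laplace's integral for the Ferrers functions. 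This gives the exact identity
\[
J_{2m}^c(t)=\frac{t!}{(t+2m)!}\,(1+c^2)^{t/2}\,P_t^{2m}(\cos\theta),
\]
with no factor involving $(\pm c/2)_{2m}$. You can confirm it at $t=2m$, where both sides equal $(c/2)^{2m}$. The factor $(-c/2)_{2m}/(c/2)^{2m}$ in the representation used in the paper is already wrong for $m=1$, $c=2$: it vanishes, whereas $J_2^2(2)=1$. That factor also causes the mismatch you noticed: the bracket contains $(2n-c/2)\cdots(2n+3-c/2)=(-c/2)_{2n+4}/(-c/2)_{2n}$, while the prefactor contains $(c/2)_{2n}$. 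Since $\frac{t!}{(t+2m)!}\cdot\frac{\Gamma(t+2m+1)}{\Gamma(t+3/2)}=\frac{\Gamma(t+1)}{\Gamma(t+3/2)}$, the Legendre asymptotics give
\[
J_{2m}^c(t)=(-1)^m\sqrt{\frac{2}{\pi t\sin\theta}}\,(1+c^2)^{t/2}\cos\Bigl(\bigl(t+\tfrac12\bigr)\theta-\tfrac{\pi}{4}\Bigr)+O\bigl(t^{-3/2}(1+c^2)^{t/2}\bigr).
\]
The main term depends on the order only through the sign $(-1)^m$.

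Consequently, for $m=n$ and $m=n+2$ the prefactors coincide and the main terms cancel identically. Your bracket $1-P_{2n+4}(t)/P_{2n}(t)$ is exactly $0$, and $J_{2n}^c(t)-J_{2n+4}^c(t)=O\bigl(t^{-3/2}(1+c^2)^{t/2}\bigr)$. Its leading behaviour comes from the first correction term of the Legendre expansion, whose coefficient involves $\frac14-\mu^2$. It is a constant multiple of $(-1)^n(n+1)\,t^{-3/2}(1+c^2)^{t/2}\cos\bigl((t+\frac32)\theta-\frac{3\pi}{4}\bigr)$. The displayed formula instead asserts a main term of size $t^{-1/2}(1+c^2)^{t/2}$ with a generically nonzero coefficient. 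For example, with $c=2$, $n=0$ the prefactor equals $1$ and the bracket equals $1+O(t^{-3/2})$. So the statement is false as written, and no bookkeeping of Pochhammer ratios or sign conventions can repair it.

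Also, the remainders in the single-order asymptotics are only of relative order $t^{-1}$, not $t^{-3/2}$. Combining them as you propose therefore discards exactly the order at which the true difference lives. Your concern about additive versus relative errors is legitimate but secondary to this cancellation. What your method does prove is the bound $O\bigl(t^{-3/2}(1+c^2)^{t/2}\bigr)$, plus the explicit $t^{-3/2}$ term if you keep one more term of the expansion. The large-time corollary that relies on this lemma would have to be revisited on that basis.
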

\begin{proof}
We recall the expression for the discrete $J$-Bessel function in terms of the Legendre function $P_\nu^\mu$ from \cite[Proof of Theorem 2]{BSS24}:
\begin{align*}
J_{2m}^c(t)&=\frac{(-c/2)_{2m} (-t)_{2m} (1+c^2)^{t/2}}{(c/2)^{2m}}\frac{(t-2m)!}{(t+2m)!}P_t^{2m}(\cos\theta) \\ &= \frac{(-c/2)_{2m} (1+c^2)^{t/2}}{(c/2)^{2m}}\frac{t^{2m}}{(t+1)_{2m}} t^{-2m}P_t^{2m}(\cos\theta).
\end{align*}
Combining this with $m=n$ and $m=n+2$, together with the asymptotic expansion \cite[formula 8.721.4]{GR} with $\nu=t$, $\mu=2n$, and $\mu=2n+4$, yields \eqref{eq. J difference}.
\end{proof}

In the following corollary, we show that, for $q\geq 2$, the wave kernel $W_{q+1}^{b(q+1-2\sqrt{q}),b}(r;t)$ oscillates in time $t$ with amplitude growing as $\left( 1+4b\sqrt{q}\right) ^{t/2}$. Figures~\ref{fig:q2} and~\ref{fig:q3} illustrate this behavior for the boundary wave kernels $W_3^{3-2\sqrt2,1}$ and $W_4^{4-2\sqrt3,1}$, respectively. They show pronounced sign oscillations together with rapid exponential amplification, while the magnitude of the oscillations decreases as the radial distance $r$ increases due to the exponentially decaying factor $q^{-r/2}$ in the asymptotic formula. Selected numerical values for the $T_3$ case are given in Table~\ref{tab:WRBoundary_q2} in the Appendix. Due to the fast growth of the wave kernels, most of the figures below, including Figures~\ref{fig:q2} and~\ref{fig:q3}, display the signed logarithmic transform defined by $\operatorname{sign}(W)\,|\log|W||$, if $W\neq 0$, and $0$, if $W=0$, of the corresponding numerical data.

\begin{figure}[h!]
    \centering
    \includegraphics[width=0.7\linewidth]{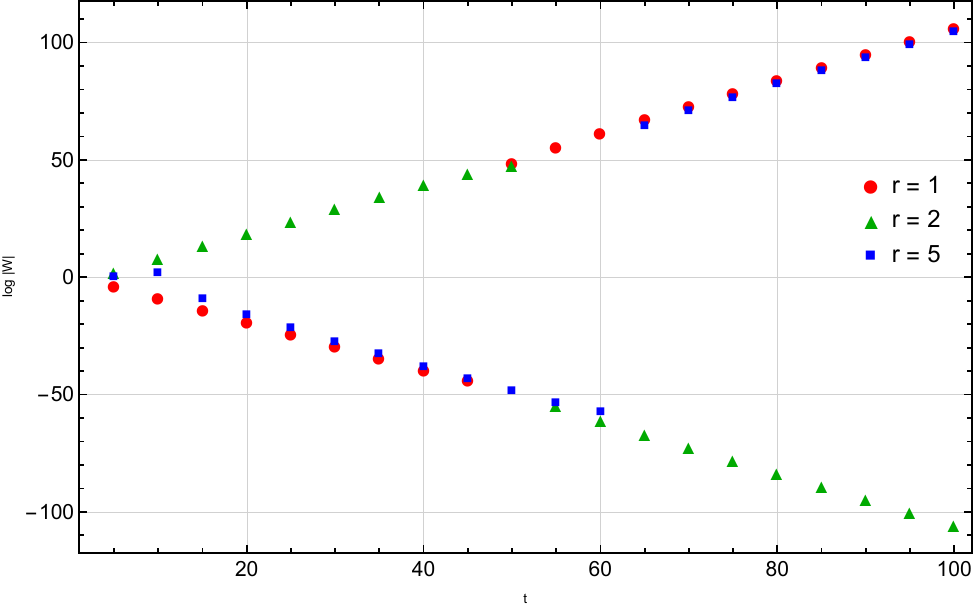}
    \caption{Signed logarithmic transform of $W_3^{3-2\sqrt{2},1}(r;t)$ for $r=1,2,5$.}
    \label{fig:q2}
\end{figure}

\begin{figure}[h!]
    \centering
    \includegraphics[width=0.7\linewidth]{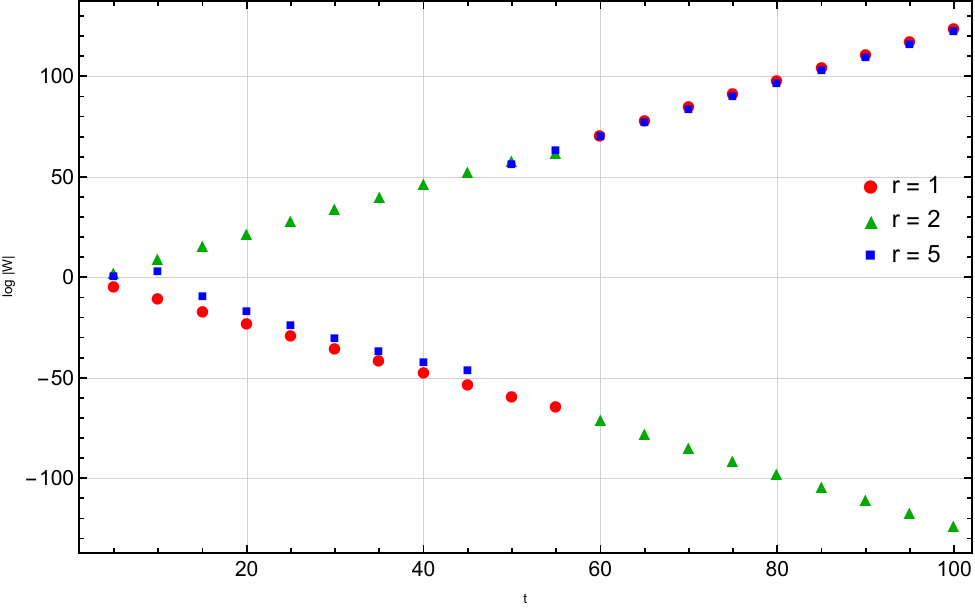}
    \caption{Signed logarithmic transform of $W_4^{4-2\sqrt{3},1}(r;t)$ for $r=1,2,5$.}
    \label{fig:q3}
\end{figure}

\begin{corollary} Let $q\geq 2$.
    For a fixed radial distance $r$, the wave kernel $W_{q+1}^{b(q+1-2\sqrt{q}),b}(r;t)$ oscillates in time $t$ with exponentially growing amplitude. In other words, for $\theta\in(0,\pi)$ such that $\cos\theta=(1+4b\sqrt{q})^{-1/2}$ we have
    $$
W_{q+1}^{b(q+1-2\sqrt{q}),b}(r;t)\sim \frac{(-1)^rq^{-r/2}}{\sqrt{\pi t\sin\theta}} C_b(r;t)\left( 1+4b\sqrt{q}
\right) ^{\frac{t}{2}}\cos \left( \left( t+\frac{1}{2}\right)
\theta -\frac{ \pi}{4} \right), \text{ \ as }%
t\rightarrow \infty \text{,}
$$
where $C_b(r;t)$ is an algebraic function of $b,\, r,\, t,\, q$ such that $\lim_{t\to\infty}|C_b(r;t)|\geq C_b(r)$, and $C_b(r)$ is a positive constant depending only on $b,\, r,\, q $.
\end{corollary}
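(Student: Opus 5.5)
We start from the $J$-Bessel representation \eqref{eq:J-form} of Proposition \ref{prop:boundary}, with $c=c_J(b)=2(qb^2)^{1/4}$, so that $1+c^2=1+4b\sqrt q$ and the angle $\theta$ of the previous lemma is exactly the $\theta$ in the statement. The first step is to regroup the finite sum in \eqref{eq:J-form} into differences of the type $J^c_{2n}(t)-J^c_{2n+4}(t)$, so that \eqref{eq. J difference} applies termwise. Writing $n_\ell=r+2\ell$ and $L=\lfloor (t-2r)/4\rfloor$, we have
\begin{equation*}
W(r;t)=q^{-r/2}J^c_{2r}(t)-(q-1)\sum_{\ell=1}^{L}q^{-n_\ell/2}J^c_{2n_\ell}(t)=\sum_{\ell=0}^{L}q^{-n_\ell/2}\bigl(J^c_{2n_\ell}(t)-J^c_{2n_\ell+4}(t)\bigr),
\end{equation*}
by the same weighted telescoping identity used in the proof of Theorem \ref{Tqkernel_a}. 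The boundary term $q^{-L}J^c_{2r+4L+4}(t)$ vanishes because $2r+4L+4>t$.

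Next we insert \eqref{eq. J difference} into each summand. Each term carries the common factor $\frac{1}{\sqrt{\pi t\sin\theta}}(1+c^2)^{t/2}\cos((t+\frac12)\theta-\frac\pi4)$ and the sign $(-1)^{n_\ell}=(-1)^r$. Factoring these out leaves
\begin{equation*}
C_b(r;t)=\sum_{\ell=0}^{L}q^{-\ell}\,\frac{(c/2)_{2n_\ell}t^{2n_\ell}}{(c/2)^{2n_\ell}(t+1)_{2n_\ell}}\bigl[1-R_\ell(t)+O(t^{-3/2})\bigr],
\end{equation*}
where $R_\ell$ is the rational correction in \eqref{eq. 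J difference}. This expression is algebraic in $b,r,t,q$ and gives the asserted asymptotic form.

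The main obstacle is to show that $|C_b(r;t)|$ stays bounded below. Two issues arise. First, the error terms are not uniform in $\ell$, and the number of summands grows with $t$. Second, the $\ell=0$ term might vanish in the limit, since $t^{2n}/(t+1)_{2n}\to1$ and $R_0(t)\to \frac{(2r-c/2)_4}{(c/2)^4}$. To handle uniformity, we first truncate at a fixed $\ell\le\ell_0$, using the geometric weight $q^{-\ell}$ together with a crude bound $|J^c_{2n}(t)|\le \sum_\ell \frac{t!}{\ell!(t-2\ell-2n)!(2n+\ell)!}(c/2)^{2\ell+2n}$. We would need to check that the tail is $o(t^{-1/2}(1+c^2)^{t/2})$ relative to the main term, or at least that it is absorbed into the bounded function $C_b$; if this fails, we simply define $C_b(r;t)$ to include it. For the finitely many remaining terms, the limit as $t\to\infty$ is
\begin{equation*}
C_b(r)=\sum_{\ell\ge0}q^{-\ell}\frac{(c/2)_{2n_\ell}}{(c/2)^{2n_\ell}}\Bigl(1-\frac{(2n_\ell-c/2)_4}{(c/2)^4}\Bigr),
\end{equation*}
which converges. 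Its nonvanishing is the delicate point. We would argue that for generic $b$ it is nonzero, since it is a nonconstant analytic function of $c$, and then take $C_b(r)$ to be its modulus. If exact vanishing occurs for special $b$, we would instead pass to the next order term in the Legendre asymptotics, accepting the weaker form stated, namely a lower bound on $\lim|C_b(r;t)|$.
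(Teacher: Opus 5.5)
\textbf{Verdict: there is a genuine gap, and it cannot be closed.} The lower bound on $|C_b(r;t)|$ does not follow from your argument, and for $q\ge2$ it is in fact false.

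Your regrouping into differences is exactly the paper's \eqref{eq. W different form}; your $\ell=L$ summand is its separate last term. Like the paper, you then insert \eqref{eq. J difference} termwise into a sum with about $t/4$ terms. The paper closes by merely asserting that $C_b(r;t)\neq 0$ for large $t$, so it does not supply the uniformity you are missing. The step you flag as the main obstacle is where your argument fails, and none of the proposed repairs works:
\begin{enumerate}
\item The series you write for $C_b(r)$ diverges. The factor $(c/2)_{2n_\ell}/(c/2)^{2n_\ell}$ grows like $\Gamma(2n_\ell+c/2)/(c/2)^{2n_\ell}$, i.e.\ factorially in $\ell$, and the bracket grows like $n_\ell^4$, so the weight $q^{-\ell}$ cannot compensate. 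This shows that \eqref{eq. J difference} is a fixed-$n$ statement and cannot be summed over $0\le\ell\le L$.
\item Your crude bound is $|J^c_{2n}(t)|\le I^c_{2n}(t)$. For fixed $n$ this is of order $(1+c)^t/\sqrt t$, which is exponentially larger than $(1+c^2)^{t/2}$ because $(1+c)^2>1+c^2$. So it cannot show that the tail $\ell>\ell_0$ is negligible.
\item Nonvanishing ``for generic $b$'' does not cover every $b>0$. Passing to a higher-order term changes the power of $t$ in front of $C_b(r;t)$. Absorbing the tail into $C_b(r;t)$ gives no lower bound for it.
\end{enumerate}

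More fundamentally, no refinement of this route can give $\lim_{t\to\infty}|C_b(r;t)|>0$ when $q\ge2$. For fixed $n$, the integral in Proposition~\ref{prop:IJ-int}\,(ii) is dominated by the saddle points $u=\pm\pi/2$, where $e^{2inu}=(-1)^n$. This gives
\[
J^c_{2n}(t)=\frac{(-1)^n}{\sqrt{\pi t}}\,\mathrm{Re}\bigl[(1-ic)^t\gamma^{-1/2}\bigr]+O\bigl(t^{-3/2}(1+c^2)^{t/2}\bigr),\qquad \gamma=\frac{c^2-ic}{2(1+c^2)} .
\]
The main term is independent of $n$ up to sign, so it cancels in $J^c_{2n}(t)-J^c_{2n+4}(t)$. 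The $n$-dependent prefactor in \eqref{eq. J difference} is therefore not reliable. For instance, the Legendre representation quoted in its proof would force $J^2_4(t)\equiv0$, while $J^2_4(4)=1$.

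Consistently, summing over $k$ in \eqref{w_t_1} in the boundary case ($c_{a,b}=1$, $d=2b\sqrt q$) gives
\[
W(r;t)=\frac{q^{-r/2}}{2\pi}\int_{-\pi}^{\pi}\mathrm{Re}\bigl(1+i\sqrt{d(1-\cos u)}\bigr)^{t}\,\frac{e^{iur}-e^{iu(r+2)}}{1-q^{-1}e^{2iu}}\,du .
\]
Here $|1+i\sqrt{d(1-\cos u)}|$ is maximal only at $u=\pm\pi$. For $q\ge2$, the even part of the second factor vanishes to second order there, with a nonzero quadratic coefficient; equivalently, the spectral density vanishes like $\sqrt{4b\sqrt q-\lambda}$ at the top of the spectrum. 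The Laplace method then gives oscillations of amplitude $\asymp t^{-3/2}(1+4b\sqrt q)^{t/2}$. Hence $C_b(r;t)=O(1/t)$ along all $t$ for which $\cos((t+\tfrac12)\theta-\tfrac\pi4)$ stays away from $0$, and no positive $C_b(r)$ can exist.

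The missing ingredient is therefore the correct order, not a tail estimate. A direct endpoint analysis of the integral above yields an asymptotic with $t^{-3/2}$ in place of $t^{-1/2}$. For $q=1$ the second factor is just $e^{iur}$, and there the $t^{-1/2}$ rate is correct.
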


\begin{proof} Our starting point is \eqref{eq:J-form}. We write it in the following form:
\begin{equation}\label{eq. W different form} 
W_{q+1}^{b(q+1-2\sqrt{q}),b}(r;t)=q^{-r/2}\left(\sum_{\ell=0}^{\lfloor\frac{t-2r}{4}\rfloor -1} q^{-\ell}\left(J_{2r+4\ell}^{c_J(b)}(t)-J_{2r+4\ell+4}^{c_J(b)}(t)\right) + q^{-\lfloor\frac{t-2r}{4}\rfloor} J_{2r+4\lfloor\frac{t-2r}{4}\rfloor}^{c_J(b)}(t)\right).
\end{equation}
Each difference of $J$-Bessel functions in the above sum can be approximated using \eqref{eq. J difference}. Moreover, we note that the second line of \eqref{eq. J difference} is equal to a quotient of two polynomials in the variable $t$, both of degree 4, which implies that their limit as $t\to\infty$ is a nonzero constant. Therefore, for $\theta\in(0,\pi)$ such that $\cos\theta=(1+4b\sqrt{q})^{-1/2}$ we have
$$
    W_{q+1}^{b(q+1-2\sqrt{q}),b}(r;t)\sim\frac{(-1)^rq^{-r/2}}{\sqrt{\pi t\sin\theta}}\left( 1+4b\sqrt{q}
\right) ^{\frac{t}{2}}\cos \left( \left( t+\frac{1}{2}\right)
\theta -\frac{\pi}{4} \right)\cdot C_b(r;t)$$
where
$$
C_b(r;t)= \sum_{\ell=0}^{\lfloor\frac{t-2r}{4}\rfloor -1} q^{-\ell}\frac{(c_J(b)/2)_{2r+4\ell}t^{2r+4\ell}}{(c_J(b)/2)^{2r+4\ell}(t+1)_{2r+4\ell}}\cdot \left(\frac{P_4(b,q,r,\ell;t)}{Q_4(b,q,r,\ell;t)}+ O\left(\frac{1}{t^{3/2}}\right)\right) + O(q^{-t/4}),
$$
as $t\to\infty$, where $P_4(b,q,r,\ell;t)$ and $Q_4(b,q,r,\ell;t)$ are polynomials of degree 4 in $t$ with coefficients depending on $b,\,q,\,r,\,\ell$. These polynomials can be explicitly deduced from the second line of \eqref{eq. J difference} with $c=c_J(b)$ and $n=r+2\ell$, but their explicit expressions are not needed for the proof. 

The term $O(q^{-t/4})$ in the above display stems from the last term in the expression \eqref{eq. W different form} combined with the asymptotic behavior of the $J$-Bessel function described in \cite[Theorem 2]{BSS24}. The statement now follows from the fact that $C_b(r;t)$ is nonzero for sufficiently large $t$.
 \end{proof}

\begin{remark}
    The amplitude of the oscillations for $W_{q+1}^{b(q+1-2\sqrt{q}),b}(r;t)$ grows as $(1+\lambda_{\max})^{t/2}$, where $\lambda_{\max} =\sup\sigma_2(\Delta_{q+1}^{b(q+1-2\sqrt{q}),b}) =4b\sqrt{q}$ is the maximum of the continuous spectrum of the generalized Laplacian $\Delta_{q+1}^{b(q+1-2\sqrt{q}),b}$. In view of the spectral resolution of the wave operator on the tree, this behavior is expected. Namely, the operator $\Delta_{q+1}^{b(q+1-2\sqrt{q}),b}$ has purely continuous spectrum $\sigma_2(\Delta_{q+1}^{b(q+1-2\sqrt{q}),b})=[0, 4b\sqrt{q}]$, and hence the spectral resolution of the corresponding wave operator is
    \begin{align} \nonumber
    \mathcal{W}_t^{b(q+1-2\sqrt{q}),b}&=\int\limits_{[0, 4b\sqrt{q}]}\left(\sum_{k=0}^{\lfloor\frac{t}{2}\rfloor} (-1)^k\binom{t}{2k}\lambda^k\right)\rho_{q+1,b}(d\lambda)=\int\limits_{[0, 4b\sqrt{q}]}\mathrm{Re}\left( 1+i\sqrt{\lambda} \right)^t \rho_{q+1,b}(d\lambda)\\
    &=\int\limits_{[0, 4b\sqrt{q}]}\left( 1+\lambda \right)^{t/2} \cos(t\arctan\sqrt{\lambda})\rho_{q+1,b}(d\lambda).\label{eq. spectal resolution}
    \end{align}
Here $\rho_{q+1,b}$ denotes the spectral measure associated with the operator $\Delta_{q+1}^{b(q+1-2\sqrt{q}),b}$ on the tree $T_{q+1}$. The spectral resolution \eqref{eq. spectal resolution} indicates that the amplitude of the oscillations of the wave kernel $W_{t}^{b(q+1-2\sqrt{q}),b}$ is bounded by $(1+4b\sqrt{q})^{t/2}$.
\end{remark}

\subsection{Large time asymptotic behavior, general case}

In this subsection, we discuss the large-time asymptotic behavior of the wave kernel $W_{q+1}^{a,b}$ for a general choice of real parameters $a,\,b$, with $b\neq 0$, such that \eqref{eq. main ass a,b} holds. Without loss of generality, we assume $b>0$. The spectral resolution of the corresponding wave operator is
\begin{equation}\label{eq. spectral gen}
\mathcal{W}_t^{a,b}=\int\limits_{[\, b(q+1-2\sqrt{q})-a,\; b(q+1+2\sqrt{q})-a \,]}\left( 1+\lambda \right)^{t/2}\cos(t\arctan\sqrt{\lambda}) \rho_{q+1,a,b}(d\lambda),
\end{equation}
where $\rho_{q+1,a,b}$ is the spectral measure associated with the operator $\Delta_{q+1}^{a,b}$. Given that the spectral resolution is very similar to the boundary case, and $b(q+1-2\sqrt{q})-a\geq 0$, a similar asymptotic behavior as $t\to\infty$ is expected.

The large-time asymptotics for $W_{q+1}^{a,b}(r;t)$ can be deduced by using the explicit expression \eqref{wave_kernel_a}, combined with the asymptotic formula
$$
I_n^c(t)\sim \frac{(\mathrm{sgn}(c))^n}{\sqrt{2\pi|c|t}}(1+|c|)^{t+1/2},\quad\text{  as  } t\to\infty
$$
proved in \cite[Proposition 3.5]{CHJSV} for $n,\,t \in\mathbb{N}_0$ and real nonzero $c$, where $\mathrm{sgn}(c)$ denotes the sign of $c$. 

We will not carry out the details here. Instead, Figures~\ref{fig:012} and~\ref{fig:013} illustrate the large-time behavior of the wave kernels $W^{0,\,1}_{3}(r;t)$ and $W^{0,\,1}_{4}(r;t)$ associated with the standard combinatorial Laplacian on the trees $T_3$ and $T_4$, respectively. The figures show very rapid exponential growth in magnitude together with strong sign oscillations, indicating severe instability of the discrete-time wave kernel for the combinatorial Laplacian. Selected numerical values for the $T_3$ case are listed in Table~\ref{tab:WR_q2_standard} in the Appendix.
\begin{figure}[h!]
    \centering
    \includegraphics[width=0.7\linewidth]{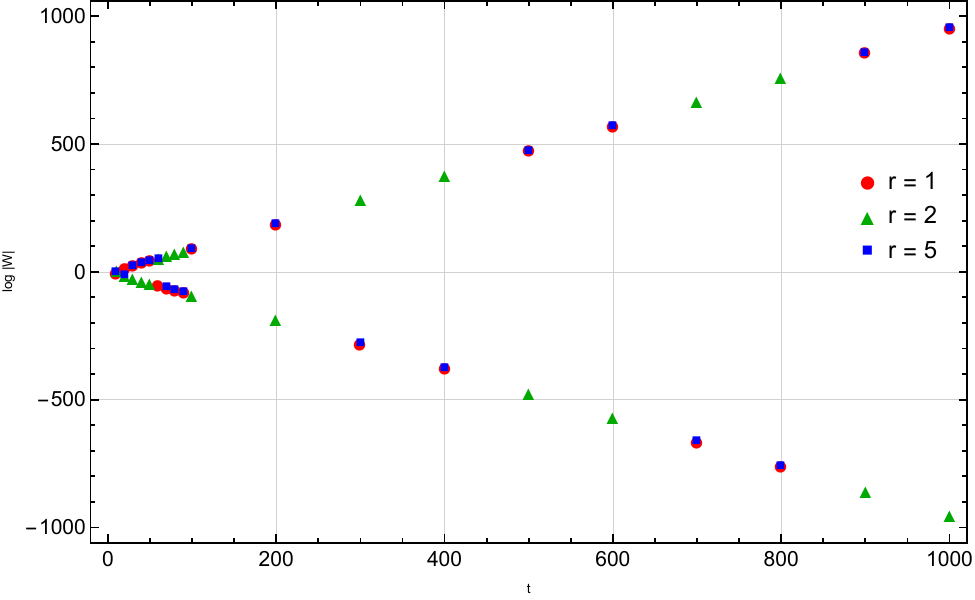}
    \caption{Signed logarithmic transform of $W^{0,\,1}_{3}(r;t)$ for $r=1,2,5$.}
    \label{fig:012}
\end{figure}

\begin{figure}[h!]
    \centering
    \includegraphics[width=0.7\linewidth]{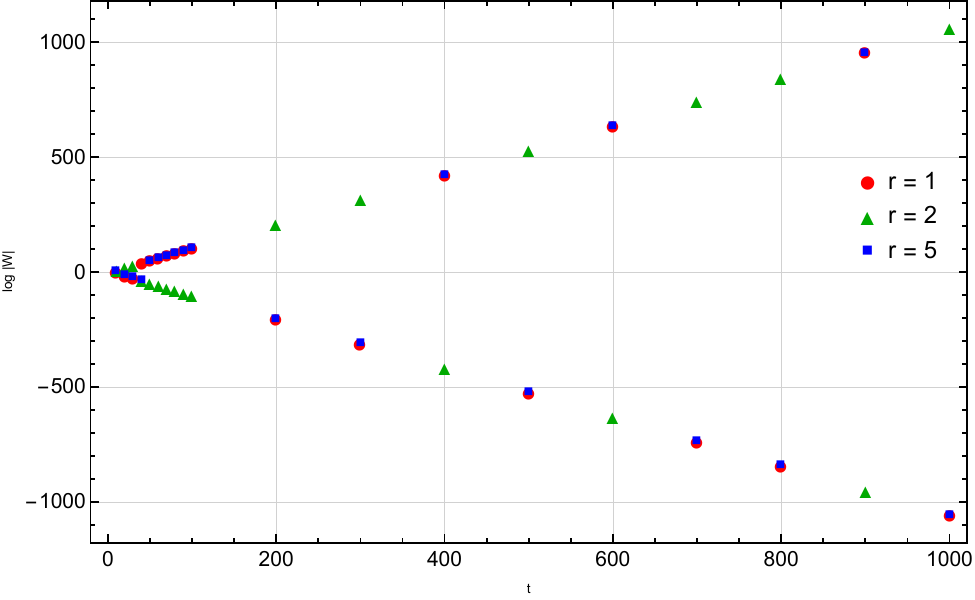}
    \caption{Signed logarithmic transform of $W^{0,\,1}_{4}(r;t)$ for $r=1,2,5$.}
    \label{fig:013}
\end{figure}

 The growth in this case is significantly faster than in the boundary case $a=b(q+1-2\sqrt{q})$. That is a consequence of the fact that the magnitude of oscillations of $W^{a,\,b}_{q+1}(r;t)$ for large $t$ is approximately $(1+b(q+1+2\sqrt{q})-a)^{t/2}$, which, for $a=0$ and $b=1$, equals $(2+q+2\sqrt{q})^{t/2}$, a number that is significantly larger for $q\geq 2$ than the magnitude $(1+4\sqrt{q})^{t/2}$ in the boundary case $a=q+1-2\sqrt{q}$ and $b=1$.

\medskip

The largest eigenvalue for the normalized/probabilistic Laplacian is $\lambda_{\max} = 1+\frac{2\sqrt{q}}{q+1}$, which is significantly smaller than $q+1+2\sqrt{q}$, the largest eigenvalue of the combinatorial Laplacian on $T_{q+1}$. Hence, the oscillations of the wave kernel $W_{q+1}^{0,1/(q+1)}$ associated with the probabilistic Laplacian exhibit smaller, yet still exponential, growth, as illustrated in Figures~\ref{fig:WR0132} and~\ref{fig:WR0143}.

\begin{figure}[h!]
    \centering
    \includegraphics[width=0.7\linewidth]{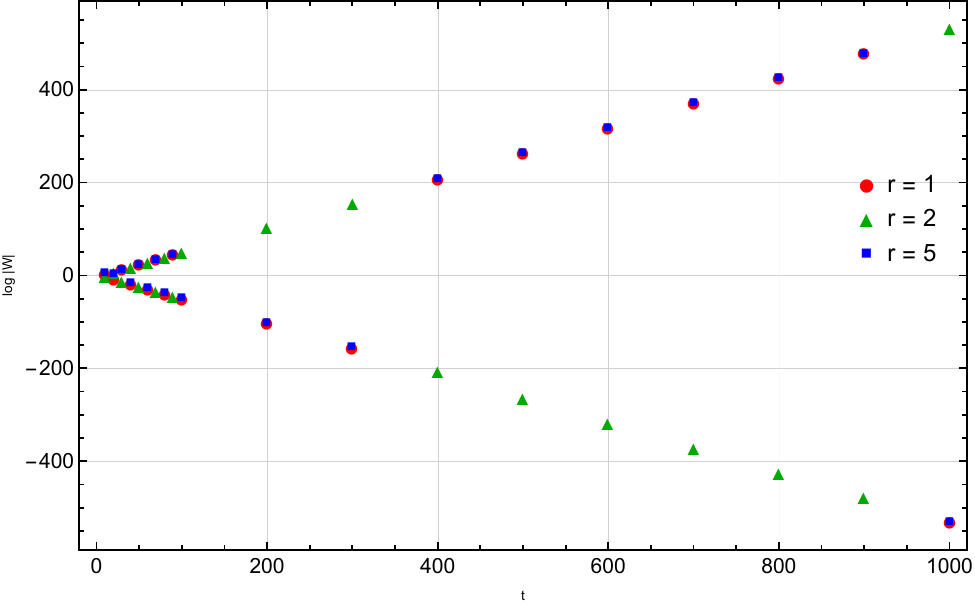}
    \caption{Signed logarithmic transform of $W^{0,\,\frac{1}{3}}_{3}(r;t)$ for $r=1,2,5$.}
    \label{fig:WR0132}
\end{figure}

\begin{figure}[h!]
    \centering
    \includegraphics[width=0.7\linewidth]{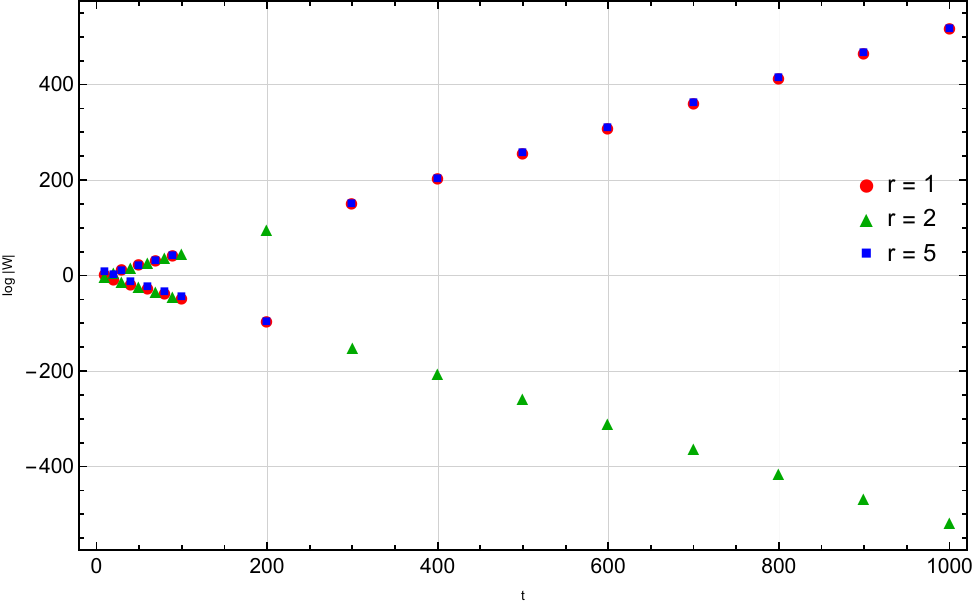}
    \caption{Signed logarithmic transform of $W^{0,\,\frac{1}{4}}_{4}(r;t)$ for $r=1,2,5$.}
    \label{fig:WR0143}
\end{figure}

\medskip

When $a$ and $b$ are chosen so that the largest eigenvalue is close to $0$ (yet positive, due to the bound \eqref{eq. main ass a,b}), 
the magnitude of the oscillations is smaller, as illustrated in Table \ref{tab:WR_smallparams_q4} in the Appendix and shown in Figure \ref{1_100} for the case where $\lambda_{\max}=\frac{2}{25}$. We note that, when $a=b$, condition \eqref{eq. main ass a,b} is fulfilled and $\lambda_{\max}=a(q+2\sqrt{q})$. Therefore, for any given $\delta>0$, by taking $a=b\in (0,\frac{\delta}{q+2\sqrt{q}})$, the maximal $L^2$-eigenvalue of $\Delta_{q+1}^{a,b}$ is less than $\delta$. In that case, the amplitude of oscillations of $W_{q+1}^{a,b}$ grows exponentially, at the rate less than $(1+\delta)^{t/2}$.
\begin{figure}[h!]
    \centering
    \includegraphics[width=0.7\linewidth]{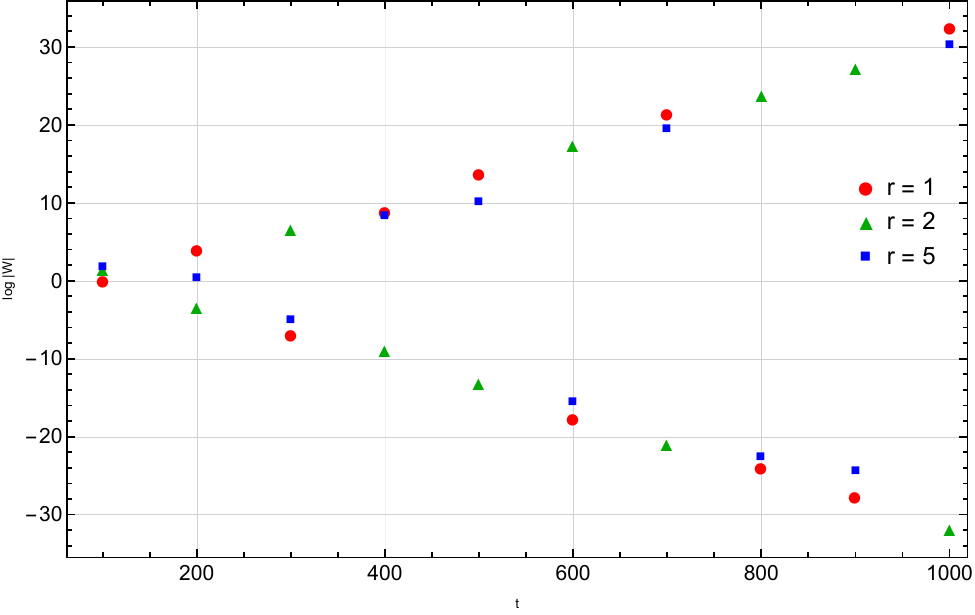}
    \caption{Signed logarithmic transform of $W^{1/100,\, 1/100}_{5}(r;t)$ for $r=1,2,5$.}
    \label{1_100}
\end{figure}

\subsection{Large $q$ asymptotic behavior: probabilistic Laplacian, boundary case}

In physics applications, Bethe lattices with $q=2$ and $q=3$ are the most common, so in the previous examples, we illustrated the behavior of the wave kernels on $3$-tree and $4$-tree. However, it is interesting to observe that, as $q\to\infty$, the wave kernel $W_{q+1}^{1-\frac{2\sqrt{q}}{q+1}, \frac{1}{q+1}}$ associated with the probabilistic Laplacian in the boundary case tends to zero (meaning that, as the edges in the graph become dense, the wave kernel tends to zero for fixed time and fixed radial distance). More precisely, we have the following corollary.

\begin{corollary}
With the notation as above, we have that for fixed $r,\, t \in\mathbb{N}_0$ with $t\geq 2r$
\begin{equation}\label{eq. asymp large q}
W^{1-\frac{2\sqrt{q}}{q+1}, \frac{1}{q+1}}_{q+1}(r;t)=\binom{t}{2r}(q+1)^{-r}\left(1+O\left(\frac{1}{\sqrt{q}}\right)\right),\quad\text{ as } q\to\infty.
\end{equation}
\end{corollary}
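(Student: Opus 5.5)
Here $b=\frac{1}{q+1}$ and $a=b(q+1-2\sqrt q)=1-\frac{2\sqrt q}{q+1}$. This is the boundary case, so I would start from the explicit finite formula \eqref{eq. W precomp} in the proof of Proposition \ref{prop:boundary}:
\[
W(r;t)=\sum_{k=r}^{\lfloor t/2\rfloor}(-1)^{k+r}q^{\frac{k-r}{2}}(q+1)^{-k}\binom{t}{2k}\left(\binom{2k}{k-r}-(q-1)\sum_{\ell=1}^{\lfloor\frac{k-r}{2}\rfloor}q^{-\ell}\binom{2k}{k-r-2\ell}\right).
\]
Since $r,t$ are fixed, this is a finite sum with a fixed number of terms. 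The whole problem therefore reduces to finding the order in $q$ of each term, with no uniformity issues.

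\textbf{Leading term.} Take $k=r$. The inner $\ell$-sum is empty, and $\binom{2r}{0}=1$, so this term equals $\binom{t}{2r}(q+1)^{-r}$. It is present because $t\ge 2r$.

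\textbf{Remaining terms.} Set $k=r+j$ with $j\ge1$. The prefactor is
\[
q^{j/2}(q+1)^{-r-j}=(q+1)^{-r}\,O\bigl(q^{-j/2}\bigr).
\]
In the bracket, the first binomial is a constant depending only on $r,t$. For the inner sum, $(q-1)q^{-\ell}\le q^{1-\ell}\le 1$ for every $\ell\ge1$, so the inner sum is $O(1)$ uniformly in $q$, with at most $\lfloor j/2\rfloor$ terms, each times a bounded binomial. The $\ell=1$ term, of size about $q^{0}$, is the worst case, and it is still only $O(1)$. So the $k=r+j$ term is $(q+1)^{-r}O(q^{-j/2})$, and the largest correction comes from $j=1$, at order $q^{-1/2}$. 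For $j=1$ the inner sum is in fact empty.

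\textbf{Conclusion.} Summing the finitely many $j\ge1$ terms gives
\[
W(r;t)=\binom{t}{2r}(q+1)^{-r}\bigl(1+O(q^{-1/2})\bigr),
\]
where the implied constant depends on $r,t$. This constant is allowed since $r,t$ are fixed, and we divide by the nonzero constant $\binom{t}{2r}$.

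\textbf{Main point to check.} The only place needing care is the $(q-1)$ factor: one must confirm it never outweighs the $q^{-\ell}$ decay, and this holds because $\ell\ge1$. Finally, for $r>0$ the main term $\binom{t}{2r}(q+1)^{-r}$ tends to $0$, which gives the limit statement.
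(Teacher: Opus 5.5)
Your proof is correct and is essentially the paper's argument. The paper also counts orders term by term in a finite sum, but it works on the regrouped $J$-Bessel form \eqref{eq:J-form}: it keeps only the first term of the series \eqref{eq:def-J-Bessel} for each $J^{c_J}_{2r+4\ell}(t)$, with $c_J=2(\sqrt q/(q+1))^{1/2}$. You instead work directly with the ungrouped binomial sum \eqref{eq. W precomp} from which \eqref{eq:J-form} is derived. Your version checks explicitly that the factor $q-1$ is harmless; the paper leaves the corresponding bound $(q-1)(q+1)^{-2\ell}=O(q^{-1})$ for its $\ell\ge1$ terms implicit. Your version also shows directly that the $O(q^{-1/2})$ correction comes from the single term $k=r+1$.
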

\begin{proof}
Recall the expression \eqref{eq:J-form}. When $b=\frac{1}{q+1}$ and $a=1-\frac{2\sqrt{q}}{q+1}$, the constant $c_J$ equals $2\left(\frac{\sqrt{q}}{q+1}\right)^{\frac{1}{2}}$. From the definition \eqref{eq:def-J-Bessel} of the discrete $J$-Bessel function with $c=c_J$, it follows directly that for $t\geq 2r$ we have
$$
J_{2r+4\ell}^{c_J}(t)=\binom{t}{2r+4\ell}\left(\frac{\sqrt{q}}{q+1}\right)^{r+2\ell}\left(1+O\left(\frac{\sqrt{q}}{q+1} \right)\right),\quad\text{as  }q\to\infty,
$$
for all $\ell=0,\ldots,\lfloor\frac{t-2r}{4}\rfloor$.
This, combined with \eqref{eq:J-form} with $b=\frac{1}{q+1}$ yields \eqref{eq. asymp large q}.
\end{proof}

The asymptotic behavior of the  wave kernel $W_{q+1}^{1-\frac{2\sqrt{q}}{q+1}, \frac{1}{q+1}}$ associated with the probabilistic Laplacian as $q\to\infty$ is illustrated in Figure \ref{fig:Wb-q-asymp}, and the corresponding numerical values are given in Table \ref{tab:Wb_q} in the Appendix.
\begin{figure}[h!]
\centering
\includegraphics[width=0.65\textwidth]{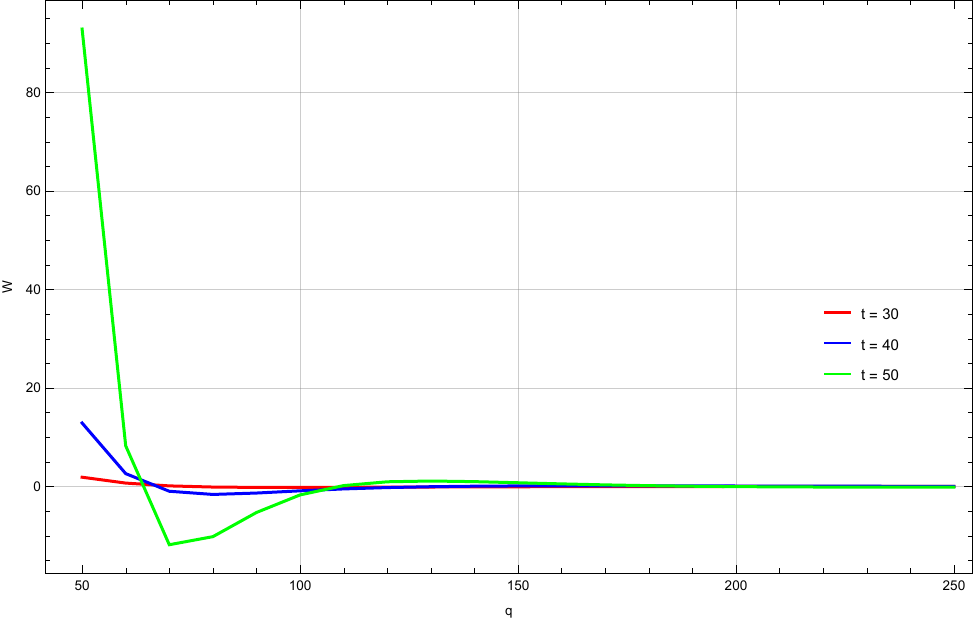}
\caption{Asymptotic behavior of the  wave kernel $W^{1-\frac{2\sqrt{q}}{q+1}, \frac{1}{q+1}}_{q+1}(2;t)$ as a function of  $q$ for $t=30,40,50$.}
\label{fig:Wb-q-asymp}
\end{figure}

\section{Fundamental solutions when  $\Delta_{q+1}^{a,b}$ is not necessarily semipositive}

In this section, we study the situation in which the parameters $a,b$ ($b\neq 0$) are arbitrary, meaning that \eqref{eq. main ass a,b} is not assumed, hence the spectrum of the operator $\Delta_{q+1}^{a,b}$ may be negative. The results of Sections 2 and 3 remain valid for any $a,b$ ($b\neq 0$) as assumption \eqref{eq. main ass a,b} was not posed there. 

In Section 4, the assumption \eqref{eq. main ass a,b} is used in the proof only to ensure that $d_{a,b}\geq 2|b|\sqrt{q}>0$, which yields that $c_{a,b}$ is well-defined. It is evident from the proof of Theorem \ref{Tqkernel_a} in Section 4 that all calculations remain valid if $d_{a,b}<0$. In other words, Theorem \ref{Tqkernel_a} holds true with the assumption \eqref{eq. main ass a,b} replaced by $d_{a,b}\neq 0$.  
When $d_{a,b}=0$, we have $\Delta_{q+1}^{a,b}=-bA_{{q+1}}$, where $A_{{q+1}}$ is the adjacency operator on the tree. We have the following proposition.

\begin{proposition} \label{prop. arb param} For any choice of real numbers $a,\, b$ with $b\neq 0$ and $d_{a,b}=b(q+1)-a \neq 0$,  \eqref{wave_kernel_a} and \eqref{wave_kernel_b} are solutions to \eqref{eq. wave on tree} with initial conditions \eqref{cond1} and \eqref{cond on V}, respectively. 

Moreover, when $a=b(q+1)$, the solution to equation \eqref{eq. wave on tree}, satisfying the initial condition \eqref{cond1}, for $x\in T_{q+1}$ with $|x|=r$, is given by
\begin{equation*}
W^{b(q+1),b}_{q+1}(r;t)
=
b^r\sum_{m=0}^{\lfloor\frac{t-2r}{4}\rfloor} \binom{t}{2r+4m}b^{2m}\sum_{j=0}^m q^j\left(\binom{r+2m}{j}-\binom{r+2m}{j-1}\right).
\end{equation*}
Moreover, the solution to \eqref{eq. wave on tree}, satisfying the initial condition \eqref{cond on V}, for $x\in T_{q+1}$ with $|x|=r$, is
\begin{equation*}
V^{b(q+1),b}_{q+1}(r;t)
=
b^r\sum_{m=0}^{\lfloor\frac{t-2r-1}{4}\rfloor} \binom{t}{2r+4m+1}b^{2m}\sum_{j=0}^m q^j\left(\binom{r+2m}{j}-\binom{r+2m}{j-1}\right),
\end{equation*}
where we use the convention that $\binom{r+2m}{-1}=0$.
\end{proposition}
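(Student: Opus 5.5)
The first claim needs no new computation. Lemma \ref{lem: kernels on tree} holds for all real $a,b$ with $b\neq 0$, and so does the spherical/classical Fourier computation in the proof of Theorem \ref{Tqkernel_a}. Assumption \eqref{eq. main ass a,b} enters only through the factorization $\tilde g_{a,b}(z)=d_{a,b}\bigl(1-c_{a,b}\cos(z\log q)\bigr)$, which requires $d_{a,b}\neq 0$. After that we apply Proposition \ref{prop:IJ-int}(i) with $\alpha=-c_{a,b}$. This is a real nonzero number, since $b\neq0$, and its sign plays no role. The identity $I_n^{-c}(k)=(-1)^nI_n^c(k)$ is purely algebraic and holds for every real $c$. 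Rereading that proof with $d_{a,b}\neq0$ therefore gives that \eqref{wave_kernel_a} equals \eqref{wave w as conv}. Formula \eqref{wave_kernel_b} then follows from \eqref{eq. V as int of W}, and Lemma \ref{lem: kernels on tree} supplies the equation and the initial conditions.

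For $a=b(q+1)$ the kernel is $g=-b(q+1)\nu$, so $\tilde g(z)=-b(q+1)\phi_z(1)=-2b\sqrt q\cos(z\log q)$. The wave kernel transform is then $\sum_k(-1)^k\binom{t}{2k}(-2b\sqrt q)^k\cos^k u$. In \eqref{inverse1} we replace $(1-c\cos u)^k$ by $\cos^k u$, up to the constant $(-2b\sqrt q)^k$, and expand $(1-q^{-1}e^{2iu})^{-1}$ as in \eqref{integ_I}. Lemma \ref{lem:integrali}(a) then gives
\[
W(r;t)=q^{-r/2}\sum_{k}\binom{t}{2k}(2b\sqrt q)^k2^{-k}\sum_{\ell\ge0}q^{-\ell}\Bigl[\tbinom{k}{\frac{k-r-2\ell}{2}}-\tbinom{k}{\frac{k-r-2\ell-2}{2}}\Bigr].
\]
Only $k\equiv r \pmod 2$ contributes. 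The case $q=1$ is treated identically with the Fourier transform on $\mathbb Z$.

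The main obstacle is reducing this to the stated form; it is pure bookkeeping. Set $k=r+2p$, so the power is $b^{r+2p}q^{(r+2p)/2}$. Combined with $q^{-r/2}$ this gives $b^{r+2p}q^{p}$. The bracket becomes $\binom{r+2p}{p-\ell}-\binom{r+2p}{p-\ell-1}$. Substituting $j=p-\ell$ turns $q^{p-\ell}=q^j$ and gives $\sum_{j=0}^{p}q^j\bigl(\binom{r+2p}{j}-\binom{r+2p}{j-1}\bigr)$. Rename $p=m$.

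The binomial factor is then $\binom{t}{2k}=\binom{t}{2r+4m}$, which vanishes unless $m\le\lfloor (t-2r)/4\rfloor$. This matches the stated formula.

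For $V$, the same computation uses $h_{2k+1}$ in place of $h_{2k}$, yielding $\binom{t}{2r+4m+1}$ and the bound $\lfloor (t-2r-1)/4\rfloor$. Alternatively, sum $W$ via \eqref{eq. V as int of W} and use the hockey-stick identity $\sum_{s<t}\binom{s}{n}=\binom{t}{n+1}$.

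Finally, we check that the infinite $\ell$-sum truncates, since $\binom{k}{\cdot}=0$ for negative lower index. The remaining convergence and interchange steps are justified exactly as in the proof of Theorem \ref{Tqkernel_a}.
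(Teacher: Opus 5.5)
Your proposal is correct. For the case $a=b(q+1)$ it coincides with the paper's proof. You use the same transform $\tilde g(z)=-2b\sqrt q\cos(z\log q)$, apply Lemma~\ref{lem:integrali}(a) where the general case used Proposition~\ref{prop:IJ-int}(i), make the substitutions $k=r+2m$ and $j=m-\ell$, and replace $h_{2k}$ by $h_{2k+1}$ for $V$.

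For the first claim ($d_{a,b}\neq 0$), your route differs from the paper's written proof.

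\textbf{The paper's route.} It checks directly that \eqref{wave_kernel_a} and \eqref{wave_kernel_b} satisfy \eqref{eq. wave on tree}. The tools are $\partial_t^2\binom{t}{2k}=\binom{t}{2k-2}$ together with the recurrence properties of $I_n^c(k)$ from \cite[Lemma 3.1]{CHJSV}.

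\textbf{Your route.} You rerun the spectral derivation of Theorem~\ref{Tqkernel_a}, noting that only $d_{a,b}\neq 0$ and $c_{a,b}\in\mathbb R\setminus\{0\}$ are used. This identifies the explicit Bessel sum with the convolution series of Lemma~\ref{lem: kernels on tree}, i.e.\ with $\mathcal W_t^{a,b}\delta_{x_0}$. That lemma already gives the equation and initial conditions for every $b\neq 0$. The paper itself sketches this argument in the paragraph just before the proposition.

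\textbf{What each route buys.} Yours stays entirely within results proved in the paper: Lemma~\ref{lem: kernels on tree}, the Fourier inversion, and Proposition~\ref{prop:IJ-int}. The direct check is purely algebraic and bypasses the spherical transform. Its cost is importing the Bessel recurrence and handling the $r=0$ equation and the $(q-1)$-tail sum, bookkeeping the paper leaves implicit.
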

\begin{proof}
When $d_{a,b} \neq 0$, by differentiating twice with respect to time and using properties of the $I$-Bessel functions from \cite[Lemma 3.1]{CHJSV} it is straightforward to check that \eqref{wave_kernel_a} and \eqref{wave_kernel_b} are solutions to \eqref{eq. wave on tree} with initial conditions \eqref{cond1} and \eqref{cond on V}, respectively. 

Let us assume now that $d_{a,b}=0$. Let $q\geq 2$. We follow the steps of the proof of Theorem \ref{Tqkernel_a}. When $d_{a,b}=0$, using the same notation, we have
    $$
    \widetilde g_{b(q+1),b}= -2b\sqrt{q}\cos(z\log q).
    $$
    Equation \eqref{w_t_1} in this case becomes
$$
W^{b(q+1),b}_{q+1}\left(r;t\right)
=\frac{1}{2\pi}q^{-\frac{r}{2}}\sum_{k=0}^{\lfloor \frac{t}{2} \rfloor} (-1)^k h_{2k}\left(t,0\right)\,(-2b\sqrt{q})^k
\int_{-\pi}^{\pi} \cos^k u\frac{e^{iu|x|}-e^{iu(|x|+2)}}{1-\frac{1}{q}e^{2iu}} \,du.
$$
Following the steps of the proof of Theorem \ref{Tqkernel_a} and applying Lemma \ref{lem:integrali} (a), we immediately deduce that
$$
W^{b(q+1),b}_{q+1}\left(r;t\right)
=q^{-\frac{r}{2}}\sum_{ \substack{k=0 \\ k-r\,\text{even}} }^{\lfloor \frac{t}{2} \rfloor}  h_{2k}\left(t,0\right)\,(b\sqrt{q})^k \sum_{\ell=0}^{ \frac{k-r}{2}}q^{-\ell}\left[\binom{k}{\frac{k-r}{2}-\ell}-\binom{k}{\frac{k-r}{2}-\ell-1}\right].
$$
The statement follows by the change of variables $k=r+2m$ and $j=m-\ell$. The proof of the expression for $V^{b(q+1),b}_{q+1}\left(r;t\right)$ is completely analogous; it follows by replacing $h_{2k}(t,0)$ with $h_{2k+1}(t,0)$.

The case $q=1$ can be treated analogously by using the Fourier transform on $\mathbb Z$.
\end{proof}

\begin{remark} \label{rem. why semipos} The assumption \eqref{eq. main ass a,b} is imposed in order to ensure the semipositivity of $\Delta_{q+1}^{a,b}$. In this range of parameters $a,\, b$, the fundamental solutions $W_{q+1}^{a,b}$ and $V_{q+1}^{a,b}$ oscillate and hence behave like \emph{waves}. If the spectrum of $\Delta_{q+1}^{a,b}$ contains negative real numbers, the fundamental solutions $W_{q+1}^{a,b}$ and $V_{q+1}^{a,b}$ to the equation \eqref{wave_eq_disc} satisfying the initial conditions \eqref{cond1} and \eqref{cond on V}, respectively are given by \eqref{wave_kernel_a} and \eqref{wave_kernel_b} may exhibit fundamentally different asymptotic behavior as $t\to\infty$. Namely, if, for example $b>0$ and $b(q+1+2\sqrt{q})-a< 0$, the spectral resolution \eqref{eq. spectral gen} of the wave operator becomes
\begin{equation*}
\mathcal{W}_t^{a,b}= \sum_{k=0}^{\lfloor\frac{t}{2}\rfloor}\binom{t}{2k}\int\limits_{[\, b(q+1-2\sqrt{q})-a,\; b(q+1+2\sqrt{q})-a \,]}|\lambda|^k\rho_{q+1,a,b}(d\lambda).
\end{equation*}
Therefore, instead of oscillations with exponentially growing amplitude, as $t\to\infty$ one observes exponential growth without oscillations. In other words, the solution $W_{q+1}^{a,b}(r;t)$ no longer describes wave propagation; rather, it describes exponential growth, which can occur in situations where negative-frequency modes arise in wave propagation (due, for example, to radiation effects or the Hawking effect; see \cite{HB-L16}). Exponential growth of the function $W_{3}^{2,1/3}(r;t)$ for $r\in\{1,2,5\}$ is visualized in Figure \ref{fig:02132}.

\begin{figure}[h!]
    \centering
    \includegraphics[width=0.8\linewidth]{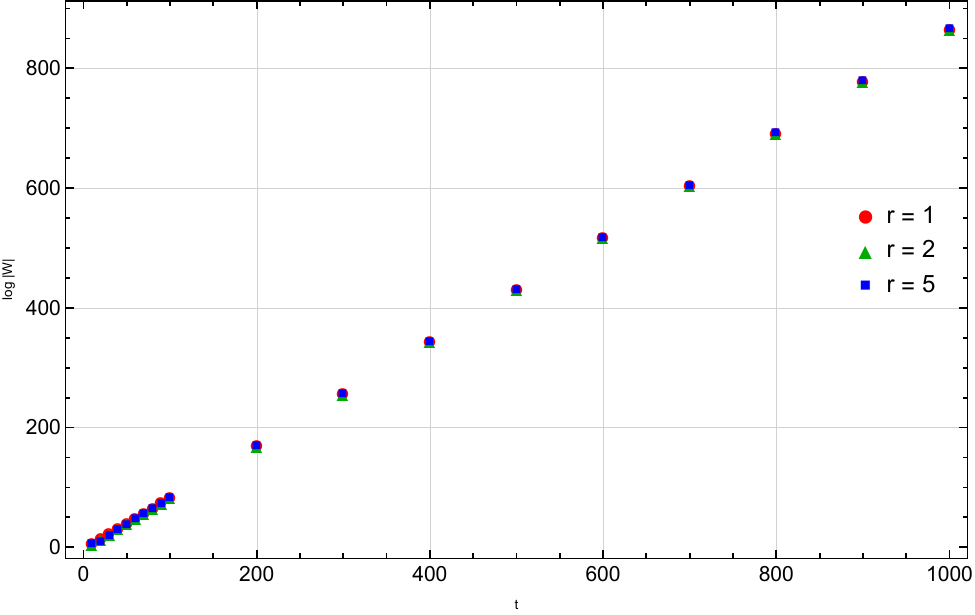}
    \caption{Signed logarithmic transform of $W_{3}^{2,1/3}(r;t)$ for $r=1,2,5$.}
    \label{fig:02132}
\end{figure}

In situations when $b>0$, $b(q+1-2\sqrt{q})-a <0$ and $b(q+1+2\sqrt{q})-a >0$ the spectral resolution is
\begin{equation*}
\mathcal{W}_t^{a,b}=\sum_{k=0}^{\lfloor\frac{t}{2}\rfloor}\binom{t}{2k}\int\limits_{[\, b(q+1-2\sqrt{q})-a,\; 0 \,]}|\lambda|^k\rho_{q+1,a,b}(d\lambda)+\int\limits_{[\, 0,\; b(q+1+2\sqrt{q})-a \,]}\mathrm{Re}\left( 1+i\sqrt{\lambda} \right)^t \rho_{q+1,a,b}(d\lambda)
\end{equation*}
and there is a combination of exponential growth and oscillations with one of the terms dominating, depending on the parameters. Further study of the fundamental solutions \eqref{wave_kernel_a} and \eqref{wave_kernel_b} in the case when the operator $\Delta_{q+1}^{a,b}$ is no longer semipositive is left for future studies.
\end{remark}

\subsection*{Data Availability}
All numerical data and figures were generated using Wolfram Mathematica. Additional numerical data are available from the authors upon reasonable request.

\clearpage

\appendix

\section{Selected numerical tables}\label{app:selected-tables}

\setcounter{table}{0}
\renewcommand{\thetable}{A.\arabic{table}}

This appendix contains selected numerical tables illustrating the main asymptotic regimes discussed in Section~\ref{sec. asymptotics}.

\begin{table}[h!]
\centering
\caption{Numerical values of $W_3^{3-2\sqrt{2},\,1}(r;t)$ for $r\in\{1,2,5,10\}$.}
\label{tab:WRBoundary_q2}
\setlength{\tabcolsep}{7pt}
\renewcommand{\arraystretch}{1.15}
\scalebox{0.8}{
\begin{tabular}{rrrrr}
\hline
$t$ & $r=1$ & $r=2$ & $r=5$ & $r=10$ \\
\hline
5   & $-42.4264$ & $10$ & $0$ & $0$ \\
10  & $-7324.21$ & $3440$ & $5.65685$ & $0$ \\
15  & $-1.2817\times 10^{6}$ & $752030$ & $-14068.6$ & $0$ \\
20  & $-2.26479\times 10^{8}$ & $1.49733\times 10^{8}$ & $-9.11643\times 10^{6}$ & $32$ \\
25  & $-3.96591\times 10^{10}$ & $2.84365\times 10^{10}$ & $-3.21547\times 10^{9}$ & $-656640$ \\
30  & $-6.73438\times 10^{12}$ & $5.16247\times 10^{12}$ & $-8.89089\times 10^{11}$ & $-1.15958\times 10^{8}$ \\
35  & $-1.07281\times 10^{15}$ & $8.80869\times 10^{14}$ & $-2.13891\times 10^{14}$ & $2.77897\times 10^{11}$ \\
40  & $-1.49319\times 10^{17}$ & $1.35106\times 10^{17}$ & $-4.63936\times 10^{16}$ & $2.38769\times 10^{14}$ \\
45  & $-1.40705\times 10^{19}$ & $1.62736\times 10^{19}$ & $-9.09769\times 10^{18}$ & $1.16848\times 10^{17}$ \\
50  & $1.02399\times 10^{21}$ & $5.0767\times 10^{20}$ & $-1.56275\times 10^{21}$ & $4.36418\times 10^{19}$ \\
55  & $1.13472\times 10^{24}$ & $-5.8751\times 10^{23}$ & $-2.08152\times 10^{23}$ & $1.37008\times 10^{22}$ \\
60  & $4.50761\times 10^{26}$ & $-2.87559\times 10^{26}$ & $-8.13917\times 10^{24}$ & $3.76857\times 10^{24}$ \\
65  & $1.4356\times 10^{29}$ & $-9.88731\times 10^{28}$ & $8.03488\times 10^{27}$ & $9.21035\times 10^{26}$ \\
70  & $4.1333\times 10^{31}$ & $-2.96981\times 10^{31}$ & $4.22777\times 10^{30}$ & $1.97887\times 10^{29}$ \\
75  & $1.12182\times 10^{34}$ & $-8.28945\times 10^{33}$ & $1.52733\times 10^{33}$ & $3.531\times 10^{31}$ \\
80  & $2.92629\times 10^{36}$ & $-2.20717\times 10^{36}$ & $4.77372\times 10^{35}$ & $4.07394\times 10^{33}$ \\
85  & $7.41286\times 10^{38}$ & $-5.68161\times 10^{38}$ & $1.37738\times 10^{38}$ & $-3.80558\times 10^{35}$ \\
90  & $1.83468\times 10^{41}$ & $-1.42475\times 10^{41}$ & $3.77135\times 10^{40}$ & $-4.73269\times 10^{38}$ \\
95  & $4.45293\times 10^{43}$ & $-3.49652\times 10^{43}$ & $9.93953\times 10^{42}$ & $-2.14123\times 10^{41}$ \\
100 & $1.06223\times 10^{46}$ & $-8.42146\times 10^{45}$ & $2.5422\times 10^{45}$ & $-7.64996\times 10^{43}$ \\
\hline
\end{tabular}}
\end{table}

\begin{table}[h!]
\centering
\caption{Numerical values of $W^{0,\,1}_{3}(r;t)$ for
$r\in\{1,2,5,10\}$.}
\label{tab:WR_q2_standard}
\setlength{\tabcolsep}{7pt}
\renewcommand{\arraystretch}{1.15}
\scalebox{0.8}{
\begin{tabular}{rrrrr}
\hline
$t$ & $r=1$ & $r=2$ & $r=5$ & $r=10$ \\
\hline
10   & $-1.171\times 10^{3}$ & $6.90\times 10^{2}$ & $1$ & $0$ \\
20   & $5.684\times 10^{6}$ & $-1.5224\times 10^{6}$ & $-4.344\times 10^{5}$ & $1$ \\
30   & $2.0195\times 10^{11}$ & $-1.25593\times 10^{11}$ & $4.98882\times 10^{9}$ & $9.63667\times 10^{6}$ \\
40   & $2.48235\times 10^{15}$ & $-1.79246\times 10^{15}$ & $2.72466\times 10^{14}$ & $4.55025\times 10^{11}$ \\
50   & $1.30607\times 10^{19}$ & $-1.09917\times 10^{19}$ & $3.51674\times 10^{18}$ & $-3.02768\times 10^{16}$ \\
60   & $-1.49617\times 10^{23}$ & $9.59936\times 10^{22}$ & $1.45796\times 10^{21}$ & $-9.87938\times 10^{20}$ \\
70   & $-4.74011\times 10^{27}$ & $3.55348\times 10^{27}$ & $-7.35673\times 10^{26}$ & $-3.69661\times 10^{24}$ \\
80   & $-6.11176\times 10^{31}$ & $4.84637\times 10^{31}$ & $-1.43108\times 10^{31}$ & $3.25444\times 10^{29}$ \\
90   & $-3.16391\times 10^{35}$ & $2.75546\times 10^{35}$ & $-1.24266\times 10^{35}$ & $8.08368\times 10^{33}$ \\
100  & $4.76749\times 10^{39}$ & $-3.47521\times 10^{39}$ & $4.88503\times 10^{38}$ & $7.05503\times 10^{37}$ \\
200  & $2.4411\times 10^{81}$ & $-2.05909\times 10^{81}$ & $8.93997\times 10^{80}$ & $-1.06456\times 10^{80}$ \\
300  & $-4.13444\times 10^{122}$ & $3.41611\times 10^{122}$ & $-1.30841\times 10^{122}$ & $7.70748\times 10^{120}$ \\
400  & $-2.6984\times 10^{164}$ & $2.32815\times 10^{164}$ & $-1.15228\times 10^{164}$ & $2.17352\times 10^{163}$ \\
500  & $6.01955\times 10^{205}$ & $-5.10468\times 10^{205}$ & $2.2891\times 10^{205}$ & $-3.00535\times 10^{204}$ \\
600  & $4.19927\times 10^{247}$ & $-3.65175\times 10^{247}$ & $1.88938\times 10^{247}$ & $-4.12308\times 10^{246}$ \\
700  & $-1.05611\times 10^{289}$ & $9.05899\times 10^{288}$ & $-4.34065\times 10^{288}$ & $7.33059\times 10^{287}$ \\
800  & $-7.59763\times 10^{330}$ & $6.63282\times 10^{330}$ & $-3.50923\times 10^{330}$ & $8.22346\times 10^{329}$ \\
900  & $2.04528\times 10^{372}$ & $-1.76575\times 10^{372}$ & $8.77682\times 10^{371}$ & $-1.68400\times 10^{371}$ \\
1000 & $1.49712\times 10^{414}$ & $-1.31034\times 10^{414}$ & $7.02629\times 10^{413}$ & $-1.71737\times 10^{413}$ \\
\hline
\end{tabular}}
\end{table}

\newpage

\begingroup
\setlength{\textfloatsep}{7pt}
\setlength{\floatsep}{7pt}
\setlength{\intextsep}{7pt}
\setlength{\abovecaptionskip}{4pt}
\setlength{\belowcaptionskip}{4pt}
\renewcommand{\arraystretch}{1.15}

\begin{table}[h!]
\centering
\caption{Numerical values of $W^{1/100,\, 1/100}_{5}(r;t)$ for $r\in\{1,2,5,10\}$.}
\label{tab:WR_smallparams_q4}
\setlength{\tabcolsep}{7pt}
\scalebox{0.8}{\begin{tabular}{rrrrr}
\hline
$t$ & $r=1$ & $r=2$ & $r=5$ & $r=10$ \\
\hline
10  & $0.29379$ & $0.018529$ & $1.0\times 10^{-10}$ & $0$ \\
20  & $-0.220978$ & $0.140936$ & $1.55901\times 10^{-5}$ & $1.0\times 10^{-20}$ \\
30  & $-0.212359$ & $-0.167347$ & $0.00139382$ & $2.75345\times 10^{-13}$ \\
40  & $0.442945$ & $-0.0704245$ & $0.0116485$ & $9.44178\times 10^{-10}$ \\
50  & $-0.683952$ & $0.272967$ & $0.00530552$ & $1.92192\times 10^{-7}$ \\
60  & $0.811726$ & $-0.47243$ & $-0.0426036$ & $7.783\times 10^{-6}$ \\
70  & $-0.784192$ & $0.586904$ & $0.0349734$ & $9.6707\times 10^{-5}$ \\
80  & $0.508425$ & $-0.558087$ & $0.0133909$ & $3.92562\times 10^{-4}$ \\
90  & $0.0938033$ & $0.308333$ & $-0.0913328$ & $1.05859\times 10^{-4}$ \\
100 & $-1.07637$ & $0.226163$ & $0.168151$ & $-0.00153$ \\
200 & $48.1217$ & $-29.5156$ & $1.44998$ & $0.0469967$ \\
300 & $-1083.83$ & $770.338$ & $-158.91$ & $-2.36607$ \\
400 & $6760.78$ & $-7086.78$ & $3710.57$ & $-139.181$ \\
500 & $827883$ & $-490317$ & $22340.2$ & $6457.87$ \\
600 & $-5.23032\times 10^{7}$ & $3.45455\times 10^{7}$ & $-5.87278\times 10^{6}$ & $-29576$ \\
700 & $1.79073\times 10^{9}$ & $-1.24164\times 10^{9}$ & $2.79428\times 10^{8}$ & $-8.52811\times 10^{6}$ \\
800 & $-2.73523\times 10^{10}$ & $2.08889\times 10^{10}$ & $-6.81709\times 10^{9}$ & $4.69778\times 10^{8}$ \\
900 & $-1.11529\times 10^{12}$ & $6.70107\times 10^{11}$ & $-4.17207\times 10^{10}$ & $-1.10533\times 10^{10}$ \\
1000 & $1.10755\times 10^{14}$ & $-7.35744\times 10^{13}$ & $1.34534\times 10^{13}$ & $-1.70982\times 10^{11}$ \\
\hline
\end{tabular}}
\end{table}

\begin{table}[ht]
\centering
\caption{Numerical values of 
$W^{1-\frac{2\sqrt{q}}{q+1}, \frac{1}{q+1}}_{q+1}(2;t)$ for $t \in \{30,40,50\}$.}
\label{tab:Wb_q}
\scalebox{0.7}{\begin{tabular}{rrrr}
\hline
$q$ & $t=30$ & $t=40$ & $t=50$ \\
\hline
50  & 1.88376  & 12.8866  & 92.8707 \\
60  & 0.691767 & 2.61305  & 8.2564 \\
70  & 0.120222 & -0.967351 & -11.8241 \\
80  & -0.112896 & -1.61512 & -10.1832 \\
90  & -0.186473 & -1.33229 & -5.27805 \\
100 & -0.190592 & -0.880076 & -1.71217 \\
110 & -0.167687 & -0.494147 & 0.184031 \\
120 & -0.136977 & -0.220022 & 0.95437 \\
130 & -0.106705 & -0.0442682 & 1.11092 \\
140 & -0.0800215 & 0.0588285 & 0.985005 \\
150 & -0.0577703 & 0.112617 & 0.760453 \\
160 & -0.0398066 & 0.134785 & 0.528146 \\
170 & -0.0256128 & 0.137696 & 0.326832 \\
180 & -0.0145782 & 0.129597 & 0.168345 \\
190 & -0.00611999 & 0.115808 & 0.0518489 \\
200 & 0.000272425 & 0.0996508 & -0.0286126 \\
210 & 0.00502669 & 0.0831283 & -0.0803555 \\
220 & 0.00849221 & 0.0673857 & -0.110335 \\
230 & 0.0109502 & 0.0530233 & -0.124484 \\
240 & 0.0126248 & 0.0403029 & -0.12756 \\
250 & 0.0136936 & 0.0292816 & -0.123229 \\
\hline
\end{tabular}}
\end{table}
\endgroup
\end{document}